\newtheorem{thm}{Theorem}
\newtheorem{lem}[thm]{Lemma}
\newtheorem{cor}[thm]{Corollary}
\newtheorem{prop}[thm]{Proposition}
\newtheorem{prob}{Problem}
\newcommand{\DP}{\times}
\newcommand{\LSs}[2]{^{#1}{#2}}
\begin{document}

\title{On well-covered direct products}

\author{
$^{a}$Kirsti Kuenzel
\and
$^{b}$Douglas F. Rall\\
}

\date{\today}

\maketitle

\begin{center}
$^a$ Department of Mathematics, Western New England University, Springfield, MA\\
$^b$ Department of Mathematics, Furman University, Greenville, SC\\

\end{center}

\begin{abstract}
A graph $G$ is well-covered if all maximal independent sets of $G$ have the same cardinality.
In 1992 Topp and Volkmann investigated the structure of well-covered graphs that have nontrivial factorizations with respect
to some of the standard graph products.  In particular, they showed that both factors of a well-covered direct product are
also well-covered and proved that the direct product of two complete graphs (respectively, two cycles) is well-covered precisely when
they have the same order (respectively, both have order 3 or 4).  Furthermore, they proved that the direct product of two well-covered
graphs with independence number one-half their order is well-covered.  We initiate a characterization  of
nontrivial, connected well-covered graphs $G$ and $H$, whose
independence numbers are strictly less than one-half their orders, such that their direct product $G \times H$ is well-covered.  In particular, we show that
in this case both $G$ and $H$ have girth 3 and we present several infinite families of such well-covered direct products.
Moreover, we show that if $G$ is a factor of any well-covered direct product, then $G$ is a complete graph unless it is possible to
create an isolated vertex by removing the closed neighborhood of some independent set of vertices in $G$.
\end{abstract}

\noindent {\bf Key words:} well-covered graph; direct product of graphs; isolatable vertex

\medskip\noindent
{\bf AMS Subj.\ Class:} 05C69; 05C76

%%%%%%%%%%%%%%%%%%%%%%
\section{Introduction}
%%%%%%%%%%%%%%%%%%%%%%

Plummer~\cite{P-1970} defined a graph to be well-covered if every maximal independent set is actually a maximum independent set.
The attempts to better understand the class of well-covered graphs have, for the most part, proceeded as follows.  Find
a nice characterization of those well-covered graphs that, in addition, belong to some natural subclass of graphs.  For instance,
Campbell, Ellingham and Royle~\cite{CER-1993} characterized the class of cubic well-covered graphs. Finbow, Hartnell and Nowakowski~\cite{FHN-1993}
characterized well-covered graphs that have no cycles of order less than 5; the same group of authors~\cite{FHN-1994} dealt with well-covered
graphs with no cycles of length 4 or 5.  In a series of papers~\cite{FHNP-1,FHNP-2,FHNP-3,FHNP-4} Finbow, Hartnell, Nowakowski and Plummer gave
a complete characterization of the class of maximal planar, well-covered graphs.

Topp and Volkmann~\cite{TV-1992} first studied well-covered graphs in the context of graph products, including the Cartesian, conjunction (now commonly
known as direct), and lexicographic products.  From their study open questions remained for Cartesian and direct products.  Several authors
contributed to the current understanding of well-covered Cartesian products.  See~\cite{F-2009, HR-2013,HRW-2018+}.  As far as well-covered
direct products are concerned, Topp and Volkmann focused mainly on graphs whose independence number is one-half the order.  These graphs are called very well-covered.
However, much remains unknown about direct products that are well-covered but not very well-covered.  In this paper we initiate the characterization of
this class of graphs.

The remainder of the paper is structured as follows. In the next section we provide the important definitions and recall preliminary results that will be
used in the remainder of the paper.  Section~\ref{sec:onefactorcomplete} is devoted to direct products in which one of the factors is a complete graph.  In
Section~\ref{sec:noisolatables} we focus on direct products in which one of the factors has no isolatable vertices.  In the main result of this section we
prove that if $G \times H$ is well-covered and $G$ has no isolatable vertices, then $G$ is in fact a complete graph.  In addition, for each positive integer $n \ge 3$
we provide two infinite families of graphs such that the direct product of $K_n$ and any graph from these families is well-covered.  In Section~\ref{sec:general}
we prove that if $G \times H$ is well-covered but not very well-covered, then every edge of $G$ (and of $H$) is incident with a triangle.
In particular, in this case both factors have girth $3$.

%%%%%%%%%%%%%%%%%%%%%%%%%%%%%%%%%%%%%%%%%%%%%
\section{Definitions and preliminary results} \label{sec:prelim}
%%%%%%%%%%%%%%%%%%%%%%%%%%%%%%%%%%%%%%%%%%%%%

In general we follow the notation of~\cite{W-1996}. In particular, we denote the order of a finite graph $G$ by $n(G)$ and for a positive integer
$k$ the set of positive integers no larger than $k$ will be denoted by $[k]$.  If $A \subseteq V(G)$, then $G[A]$ is the subgraph of $G$
induced by $A$.  The set of isolated vertices of $G$ will be denoted $G_0$ and $G^+$ will represent the induced subgraph $G-G_0$.  A subset
$D\subseteq V(G)$ \emph{dominates} a subset $S \subseteq V(G)$ if $S \subseteq N[D]$.  If $D$ dominates~$V(G)$, then we will also say that
$D$ \emph{dominates the graph} $G$ and that $D$ is a \emph{dominating set} of $G$.  A set $I \subseteq V(G)$ is an \emph{independent dominating} set
if $I$ is simultaneously independent and dominating.  This is equivalent to $I$ being a maximal independent set with respect to set inclusion.
The \emph{independence number} of $G$ is the cardinality, $\alpha(G)$, of a largest independent set in $G$; we denote the smallest cardinality
of a maximal independent set in $G$ by $i(G)$.  The graph $G$ is \emph{well-covered} if all maximal independent sets of $G$ have
the same cardinality.  Equivalently, $G$ is well-covered if $i(G)=\alpha(G)$. The \emph{independence ratio} of a graph $G$
is defined by $\frac{\alpha(G)}{n(G)}$.

In a well-covered graph $G$ every vertex can (in a greedy fashion) be enlarged to a maximal independent set, which then has order $\alpha(G)$.
Note that a graph is well-covered if and only if each of its  components is well-covered.  A vertex of degree 1 is called
a \emph{leaf} and its only neighbor is called a \emph{support vertex}.  If $G$ is a well-covered graph with a support vertex $x$ and
$M$ is any maximal independent set in $G$ that contains $x$, then replacing $x$ in $M$ by its set $L$ of adjacent leaves is also independent.  It follows that
$|L|=1$.  A vertex $x$ of $G$ is \emph{isolatable} if there exists an independent set $I$ in $G$ such that $x$ has degree 0 (that is,
$x$ is \emph{isolated}) in $G-N[I]$.  Note that a leaf in a component of order at least 3 is isolatable.

The \emph{direct product}, $G\DP H$, of graphs $G$ and $H$ is defined as follows:
\begin{itemize}
\item $V(G \DP H)=V(G) \DP V(H)$;
\item $E(G \DP H)= \{ (g_1,h_1)(g_2,h_2) \, |\, g_1g_2 \in E(G) \,\,\text{and}\,\,h_1h_2 \in E(H) \}$
\end{itemize}
The direct product is both commutative, associative and distributes over disjoint unions of graphs.   For a vertex $g$ of $G$, the \emph{$H$-layer over $g$}
of $G\DP H$ is the set $\{ \, (g,h) \mid h\in V(H) \,\}$, and it is denoted by $\LSs g H$.  Similarly, for $h \in V(H)$, the \emph{$G$-layer over $h$},
$G^h$, is the set $\{ \, (g,h) \mid g\in V(G) \,\}$.  Note that each $G$-layer and each $H$-layer is an independent set in $G\DP H$.  The \emph{projection to $G$}
is the map $p_G: V(G\DP H) \to V(G)$ defined by $p_G(g,h)=g$.  Similarly, the \emph{projection to $H$} is the map $p_H: V(G\DP H) \to V(H)$ defined by $p_H(g,h)=h$.

In the remainder of this section we present some results that will prove useful in establishing our main results.  The first lemma is due to Topp and Volkmann~\cite{TV-1992}.
We provide a short proof since the ideas therein are so common when studying well-covered direct products.

\begin{lem}[\cite{TV-1992}] \label{lem:inverseimage}
Let $H$ be a graph with no isolated vertices.  If $I$ is a maximal independent set of any graph $G$, then $I \DP V(H)$ is a maximal independent set of $G \DP H$.
\end{lem}
\begin{proof} For any $g \in I$, the $H$-layer over $g$ is independent.  Since $I$ is independent in $G$, it follows that for distinct vertices $a$ and $b$ in $I$
no vertex of $\LSs {a}{H}$ is adjacent to any vertex of $\LSs{b}{H}$, and thus $I \DP V(H)$ is independent.  Let $(u,v) \in V(G\DP H)-(I \DP V(H))$.  Since
$I$ is a maximal independent set of $G$ and $u\not\in I$, we infer there exists $x\in I$ such that $x$ and $u$ are adjacent.  For any neighbor $y$ of $v$
(such a vertex $y$ exists since $H$ has no isolated vertices), it
follows that $(x,y)$ belongs to $I \DP V(H)$ and is adjacent to $(u,v)$.  We conclude that $I \DP V(H)$ is a maximal independent set in $G \DP H$.  \end{proof}

\medskip
As an immediate consequence of Lemma~\ref{lem:inverseimage} we get a lower bound for $\alpha(G \DP H)$, which is well-known
(see \cite{JS-1994, NR-1996}), and an upper bound for $i(G \DP H)$.
\begin{cor} \label{cor:triviallower}
If both $G$ and $H$ have no isolated vertices, then
\begin{itemize}
\item $\alpha(G \DP H) \ge \max \{\alpha(G) n(H), \alpha(H) n(G)\}$;
\item $i(G \DP H) \le \min \{i(G) n(H), i(H) n(G)\}$.
\end{itemize}
\end{cor}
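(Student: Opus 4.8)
The plan is to read off both inequalities directly from Lemma~\ref{lem:inverseimage}, choosing the maximal independent set $I$ optimally in each case and exploiting the commutativity of the direct product noted above.

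For the lower bound on $\alpha(G \DP H)$, I would first take $I$ to be a \emph{maximum} independent set of $G$, so that $|I| = \alpha(G)$. Since $H$ has no isolated vertices, Lemma~\ref{lem:inverseimage} guarantees that $I \DP V(H)$ is a maximal independent set of $G \DP H$; its cardinality is $|I|\,n(H) = \alpha(G)\,n(H)$, and because $\alpha(G \DP H)$ is at least the size of any independent set, this yields $\alpha(G \DP H) \ge \alpha(G)\,n(H)$. Applying the same argument to $H \DP G$ --- now using that $G$ has no isolated vertices and that $\alpha(G \DP H) = \alpha(H \DP G)$ by commutativity --- with a maximum independent set of $H$ gives $\alpha(G \DP H) \ge \alpha(H)\,n(G)$. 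Taking the larger of the two bounds completes the first item.

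For the upper bound on $i(G \DP H)$, I would instead take $I$ to be a \emph{minimum} maximal independent set of $G$, so that $|I| = i(G)$. Again $I \DP V(H)$ is a maximal independent set of $G \DP H$, this time of cardinality $i(G)\,n(H)$, and since $i(G \DP H)$ is by definition the minimum cardinality over all maximal independent sets, we obtain $i(G \DP H) \le i(G)\,n(H)$. The symmetric argument via commutativity yields $i(G \DP H) \le i(H)\,n(G)$, and taking the smaller of the two bounds gives the second item.

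There is no real obstacle here: the only things to verify are that $I \DP V(H)$ (respectively its analogue in $H \DP G$) really is maximal --- which is exactly the content of Lemma~\ref{lem:inverseimage} --- and that the hypothesis of having no isolated vertices is invoked for the correct factor. One should simply be careful to apply the lemma with the factor that has no isolated vertices playing the role of the second coordinate, which is precisely why both hypotheses are needed and why commutativity is used to treat the two halves of each maximum/minimum symmetrically.
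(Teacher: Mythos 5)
Your proof is correct and is exactly the argument the paper intends: the corollary is stated there as an immediate consequence of Lemma~\ref{lem:inverseimage}, obtained by applying that lemma to a maximum (respectively, minimum maximal) independent set of one factor and using commutativity for the symmetric bound. No issues.
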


The following lemma follows directly from the definition of well-covered.  It has been very useful (especially as a necessary condition
to show that a graph is not well-covered) in several of the papers characterizing well-covered graphs having some additional property (for example, a girth restriction).

\begin{lem} [\cite{FHN-1993}] \label{lem:basic}
If $G$ is a well-covered graph and $I$ is an independent set of $G$, then $G-N[I]$ is well-covered.
\end{lem}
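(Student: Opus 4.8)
The plan is to show that every maximal independent set of $G-N[I]$ has the same cardinality by relating it back to a maximal independent set of $G$ itself. The key observation is that appending $I$ to any maximal independent set of $G-N[I]$ produces a maximal independent set of $G$, and since $G$ is well-covered all such sets have size $\alpha(G)$.

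First I would fix an arbitrary maximal independent set $J$ of $G' := G-N[I]$ and argue that $I \cup J$ is independent in $G$. Independence of $I$ is given, and $J$ is independent in $G$ because $G'$ is an induced subgraph of $G$; so it remains only to rule out edges between $I$ and $J$. This follows because every vertex of $J$ lies in $V(G)-N[I]$ and hence is neither a vertex of $I$ nor adjacent to any vertex of $I$. In particular $I$ and $J$ are disjoint, so $|I \cup J| = |I| + |J|$.

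Next I would verify that $I \cup J$ is maximal in $G$. Suppose some vertex $v \notin I \cup J$ has no neighbor in $I \cup J$. Since $v$ is not adjacent to any vertex of $I$ and $v \notin I$, we have $v \notin N[I]$, so $v$ is a vertex of $G'$. But then $v$ lies outside $J$ and has no neighbor in $J$, contradicting the maximality of $J$ in $G'$. Hence $I \cup J$ is a maximal independent set of $G$.

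Finally, because $G$ is well-covered, $|I \cup J| = \alpha(G)$, and therefore $|J| = \alpha(G) - |I|$. As this value is independent of the choice of $J$, all maximal independent sets of $G'$ have cardinality $\alpha(G) - |I|$, so $G-N[I]$ is well-covered. There is no genuine obstacle in this argument; the only point requiring care is the maximality step, namely that maximality of $J$ in $G'$ together with non-adjacency to $I$ forces maximality of $I \cup J$ in $G$, which is precisely what the paragraph above establishes.
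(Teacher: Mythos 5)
Your proof is correct and is precisely the standard argument the paper has in mind: the paper states this lemma without proof (citing Finbow, Hartnell and Nowakowski and remarking that it ``follows directly from the definition of well-covered''), and the intended justification is exactly your observation that adjoining $I$ to any maximal independent set of $G-N[I]$ yields a maximal independent set of $G$, forcing every maximal independent set of $G-N[I]$ to have cardinality $\alpha(G)-|I|$. All steps, including the maximality verification, are handled correctly.
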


The following lemma holds for any graph.
\begin{lem} \label{lem:cliqueleftover}
If $G$ is any  graph and $J$ is an independent set of vertices in $G$ such that $|J|=\alpha(G)-1$, then either $J$
is a maximal independent set or $G-N[J]$ is a complete graph.
\end{lem}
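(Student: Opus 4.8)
The plan is to exploit the fact that $|J|$ falls exactly one short of $\alpha(G)$, so that the ``leftover'' graph $G-N[J]$ cannot contain a nonadjacent pair without producing an independent set that is too large. First I would identify what the vertex set of $G-N[J]$ actually is. Since $N[J] = J \cup N(J)$, a vertex $v$ lies in $V(G)-N[J]$ precisely when $v \notin J$ and $v$ is nonadjacent to every vertex of $J$; equivalently, $V(G)-N[J]$ is exactly the set of vertices $v$ for which $J \cup \{v\}$ is again independent. This reframing is the whole engine of the argument.

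Next I would dispose of the degenerate case. If $V(G)-N[J] = \emptyset$, then no vertex outside $J$ can be appended to $J$ while preserving independence, so $J$ is a maximal independent set and the first alternative of the conclusion holds. So assume $J$ is not maximal, which by the characterization above forces $V(G)-N[J]$ to be nonempty. I then claim $G-N[J]$ is complete. The key step is to take any two distinct vertices $u,w \in V(G)-N[J]$ and suppose toward a contradiction that $uw \notin E(G)$. Because $J$ is independent and both $u$ and $w$ are nonadjacent to all of $J$ by construction, while $u$ and $w$ are nonadjacent to each other by assumption, the set $J \cup \{u,w\}$ is independent. But then $|J \cup \{u,w\}| = \alpha(G)+1$, contradicting the definition of $\alpha(G)$. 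Hence every pair of distinct vertices in $V(G)-N[J]$ is adjacent, i.e.\ $G-N[J]$ is a complete graph, giving the second alternative.

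There is no serious obstacle here; the statement is essentially a one-line counting observation once $V(G)-N[J]$ is recognized as the set of extendable vertices. The only point needing a word of care is the degenerate boundary: when $V(G)-N[J]$ consists of a single vertex it is trivially $K_1$, and the empty graph arising when $J$ is maximal is vacuously complete. This is precisely why the statement lists ``$J$ is a maximal independent set'' as a separate alternative rather than absorbing it into the completeness clause, and phrasing the two cases this way keeps the argument clean.
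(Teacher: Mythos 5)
Your proof is correct and follows essentially the same argument as the paper: assume $J$ is not maximal, take two nonadjacent vertices $u,w$ in $G-N[J]$, and observe that $J\cup\{u,w\}$ would be an independent set of size $\alpha(G)+1$, a contradiction. The extra remarks about the degenerate cases are fine but not needed beyond what the paper records.
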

\begin{proof}
Suppose that $J$ is not a maximal independent set in $G$.  This implies that $G - N[J]$ is nonempty. If $G-N[J]$ contains two nonadjacent
vertices $u$ and $v$, then $J \cup \{u,v\}$ is independent and has cardinality $\alpha(G)+1$, which is a contradiction.
\end{proof}

Our results will always involve graphs with no isolated vertices.  However, there are a number of situations in which isolated vertices
arise when the closed neighborhood of an independent set is removed from a graph.  Thus we have the following generalization of a theorem
first proved by Topp and Volkmann~\cite{TV-1992}.
\begin{thm}\label{thm:wcdirect}
If $G$ and $H$ are graphs and $G\times H$ is well-covered, then
\begin{enumerate}
\item[(a)] $G$ and $H$ are well-covered, and
\item[(b)] $\alpha(G^+)n(H^+) = \alpha(H^+)n(G^+)$.
\end{enumerate}
\end{thm}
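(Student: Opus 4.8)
The plan is to strip away isolated vertices first, reducing everything to the case where the factors have none, and then to play Lemma~\ref{lem:inverseimage} against Corollary~\ref{cor:triviallower}.

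I would begin by locating the isolated vertices of the product. A pair $(g,h)$ has a neighbor in $G \times H$ exactly when $g$ has a neighbor in $G$ and $h$ has a neighbor in $H$, so $(g,h)$ is isolated in $G \times H$ if and only if $g \in G_0$ or $h \in H_0$. Deleting these vertices leaves precisely the pairs with $g \in V(G^+)$ and $h \in V(H^+)$, and a surviving edge joins two such pairs exactly when their (necessarily non-isolated) coordinates are adjacent in the respective factors; hence $(G \times H)^+ = G^+ \times H^+$. Since a maximal independent set always contains every isolated vertex, a graph is well-covered if and only if the graph obtained by deleting its isolated vertices is (this is immediate from the stated fact that a graph is well-covered iff each of its components is). Applying this to $G$, to $H$, and to $G \times H$ reduces the theorem to the isolate-free setting: it suffices to show that if $G$ and $H$ have no isolated vertices and $G \times H$ is well-covered, then $G$ and $H$ are well-covered and $\alpha(G)n(H) = \alpha(H)n(G)$. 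So I now assume $G$ and $H$ are isolate-free.

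For part~(a), let $I_1$ and $I_2$ be any two maximal independent sets of $G$. As $H$ has no isolated vertices, Lemma~\ref{lem:inverseimage} makes $I_1 \times V(H)$ and $I_2 \times V(H)$ maximal independent sets of the well-covered graph $G \times H$, so they have equal size; dividing $|I_1|n(H) = |I_2|n(H)$ by $n(H)$ gives $|I_1| = |I_2|$, whence $G$ is well-covered. By commutativity of the direct product the mirror argument (with sets $V(G) \times J$ for maximal independent $J \subseteq V(H)$) shows $H$ is well-covered, and in particular $i(G) = \alpha(G)$ and $i(H) = \alpha(H)$. For part~(b), Corollary~\ref{cor:triviallower} sandwiches the (equal) numbers $\alpha(G \times H)$ and $i(G \times H)$:
\[
\max\{\alpha(G)n(H),\,\alpha(H)n(G)\} \le \alpha(G \times H) = i(G \times H) \le \min\{i(G)n(H),\,i(H)n(G)\}.
\]
After substituting $i(G) = \alpha(G)$ and $i(H) = \alpha(H)$ the two outer quantities become the $\max$ and $\min$ of the same pair $\{\alpha(G)n(H),\,\alpha(H)n(G)\}$, and $\max \le \min$ forces $\alpha(G)n(H) = \alpha(H)n(G)$. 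Undoing the first-paragraph reduction restores $\alpha(G^+)n(H^+) = \alpha(H^+)n(G^+)$.

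The proof is short once the preliminary lemmas are in hand, so the real work --- and the one genuinely delicate point --- is the first paragraph: verifying the identity $(G \times H)^+ = G^+ \times H^+$ together with the insensitivity of well-coveredness to isolated vertices is exactly what upgrades the isolate-free Topp--Volkmann statement to the version stated here. I would also watch the degenerate cases: the division by $n(H)$ in part~(a), and the content of Lemma~\ref{lem:inverseimage}, require the opposite factor to contain an edge (i.e.\ $G^+$ and $H^+$ nonempty); when a factor is edgeless the product is edgeless and part~(b) holds with both sides $0$, so the substantive statement is the one in which both factors carry an edge.
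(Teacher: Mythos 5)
Your proof is correct and follows essentially the same route as the paper: both decompose $G \times H$ into $G^+ \times H^+$ together with a set of isolated vertices, both use Lemma~\ref{lem:inverseimage} to push maximal independent sets of a factor into the product for part (a), and both obtain part (b) by playing well-coveredness against the two bounds of Corollary~\ref{cor:triviallower}. Your closing remark about edgeless factors is well taken --- the paper's argument, like yours, tacitly needs $G^+$ and $H^+$ nonempty for part (a), and indeed $P_3 \times K_1$ is an edgeless (hence well-covered) product with a non-well-covered factor --- but this caveat applies equally to both proofs and does not change the comparison.
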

\begin{proof}
Assume that $G \DP H$ is well covered. Since the direct product distributes over disjoint unions, $G \DP H$ is the disjoint union of $G^+ \DP H^+$ and a graph
$K$, which is a set of $n(G)\cdot |H_0| +|G_0|\cdot n(H^+)$ isolated vertices.  The subgraph $G^+ \DP H^+$ of $G \DP H$ is well-covered since it is
the disjoint union of some components (possibly just 1) of the well-covered graph $G \DP H$.   If $I$ is any maximal independent
set of $G^+$, then by Lemma~\ref{lem:inverseimage} we get $(I \times V(H^+)) \cup K$  is a maximal independent set of $G \times H$.  Using the assumption that
$G \times H$ is well-covered, this implies that all maximal independent sets of $G^+$ have the same cardinality and so $G^+$ and $G$ are well-covered.
Similarly, $H^+$ and $H$ are well-covered.  Moreover, both $G^+$ and $H^+$ have no isolated vertices.  We infer by Corollary~\ref{cor:triviallower} that
$\alpha(G^+)n(H^+)=\alpha(H^+)n(G^+)$.
\end{proof}

When $G \DP H$ is well-covered and neither $G$ nor $H$ has isolated vertices, Theorem~\ref{thm:wcdirect} implies that $G$ and $H$ have the same \emph{independence ratio}.
That is, $\frac{\alpha(G)}{n(G)}=\frac{\alpha(H)}{n(H)}$.
On the other hand, if $G$ or $H$ has isolated vertices and $G \DP H$ is well-covered, then these ratios may not be equal.
For a small example let $G=K_2 \cup K_1$ and $H=K_2$.

The following result was proved by Berge~\cite{b-1981}.

\begin{thm}[\cite{b-1981}]
If $G$ is a well-covered graph with no isolated vertices, then $|S| \le |N(S)|$ for any independent set $S$ of $G$.
\end{thm}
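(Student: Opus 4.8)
The plan is to argue by contradiction. Among all pairs consisting of a well-covered graph with no isolated vertices and an independent set of that graph that violates the inequality, I would choose one, say $G$ and $S$, for which $|S|$ is as small as possible. Minimality then guarantees that $|N(S')| \ge |S'|$ for every independent set $S'$ of $G$ with $|S'| < |S|$, and this self-referential hypothesis is the engine of the proof. The cases $|S| \le 1$ cannot occur, since a graph without isolated vertices has $|N(x)| \ge 1$ for every vertex $x$; in particular $N(S) \neq \emptyset$.

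Next I would pass to the induced subgraph $G[N[S]]$. Pick any $b \in N(S)$ and greedily extend $\{b\}$ to a maximal independent set $A$ of $G[N[S]]$, and write $A_S = A \cap S$ and $A_B = A \cap N(S)$. Since $b$ is adjacent to some vertex of $S$, that vertex is excluded from $A$, so $A_S$ is a \emph{proper} subset of $S$. As $A_S$ is independent with $|A_S| < |S|$, the minimality hypothesis yields $|N(A_S)| \ge |A_S|$. Because $A_S \subseteq S$ we have $N(A_S) \subseteq N(S)$, and independence of $A$ forces $A_B$ and $N(A_S)$ to be \emph{disjoint} subsets of $N(S)$ (a common vertex would be a vertex of $A$ adjacent to another vertex of $A$). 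Hence $|N(S)| \ge |A_B| + |N(A_S)| \ge |A_B| + |A_S| = |A|$, so that $|A| \le |N(S)| < |S|$.

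To finish I would extend $A$ to a maximal independent set $M$ of $G$. Since $A$ is already maximal in $G[N[S]]$ it dominates $N[S]$, so every vertex added in this extension lies outside $N[S]$; that is, $M \setminus A \subseteq V(G) \setminus N[S]$, whence no vertex of $M \setminus A$ lies in $S$ or is adjacent to $S$, and $S \cup (M \setminus A)$ is independent. Using that $G$ is well-covered, $|M| = \alpha(G)$, and therefore
\[
|S \cup (M \setminus A)| = |S| + \big(\alpha(G) - |A|\big) \ge |S| + \alpha(G) - (|S| - 1) = \alpha(G) + 1,
\]
contradicting the definition of $\alpha(G)$. (Alternatively one may invoke Lemma~\ref{lem:basic} together with the extension argument behind Lemma~\ref{lem:inverseimage} to identify $M \setminus A$ with a maximal independent set of the well-covered graph $G - N[S]$, of size exactly $\alpha(G) - |S|$, but the bare counting above already suffices.)

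I expect the main obstacle to be identifying the correct inductive object: the argument works precisely because one extends a vertex of $N(S)$ — not a vertex of $S$ — inside $G[N[S]]$, and then applies the statement recursively to $A \cap S$. The delicate points are arranging the minimal-counterexample (equivalently, strong induction on $|S|$) so that this recursive application is legitimate, and verifying carefully that extending $A$ within $G$ adjoins only vertices outside $N[S]$, since that is exactly what makes $S \cup (M \setminus A)$ an independent set strictly larger than $\alpha(G)$.
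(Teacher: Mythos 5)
The paper does not actually prove this statement---it is quoted verbatim from Berge's 1981 paper with only a citation, so there is no in-text argument to compare yours against. Judged on its own, your proof is correct and complete. The minimal-counterexample setup is sound (minimality over all pairs in particular covers all smaller independent sets of the fixed graph $G$), the base cases $|S|\le 1$ are correctly dispatched using the absence of isolated vertices, and the central step is right: starting the maximal independent set $A$ of $G[N[S]]$ at a vertex $b\in N(S)$ forces $A\cap S\subsetneq S$, the recursive bound $|N(A\cap S)|\ge |A\cap S|$ combines with the disjointness of $A\cap N(S)$ and $N(A\cap S)$ inside $N(S)$ to give $|A|\le |N(S)|<|S|$, and the extension of $A$ to a maximal independent set $M$ of $G$ adds only vertices outside $N[S]$ (since $A$ dominates $N[S]$), so $S\cup(M\setminus A)$ is an independent set of size at least $\alpha(G)+1$ once well-coveredness gives $|M|=\alpha(G)$. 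This is essentially the standard argument for Berge's theorem on B-graphs, and what it buys here is a self-contained elementary proof where the paper relies on an external reference; the only stylistic quibble is that you could phrase the whole thing as strong induction on $|S|$ within a fixed $G$, since the graph never changes in the recursion.
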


This result immediately implies that $\alpha(G) \le \frac{1}{2}n(G)$, for any well-covered $G$ with no isolated vertices.
A well-covered graph with no isolated vertices that achieves this upper bound is called \emph{very well-covered}.  Since both partite sets
of a bipartite graph are maximal independent sets, it is clear that a well-covered bipartite graph with no isolated vertices is very well-covered.
Favaron~\cite{f-1982} characterized the very well-covered graphs in terms of the existence of a perfect matching that possesses a special property.
Let $G$ be a graph with a perfect matching $M$.  For each vertex $u$ of $G$, we let $M(u)$ denote the vertex adjacent to $u$ in $M$. Favaron~\cite{f-1982}
said $M$ has \emph{Property (P)} if for every vertex $x$ of $G$ the following holds.
\begin{itemize}
\item If $y\in N_G(x)$ and $y\neq M(x)$, then $y \notin N_G(M(x))$ and $y \in N_G(z)$, for every $z \in N_G(M(x))$.
\end{itemize}
The following theorem of Favaron gives the aforementioned characterization.
\begin{thm} [\cite{f-1982}] \label{thm:vwc-characterization}
The following are equivalent for any simple graph $G$.
\begin{enumerate}
\item[(i)] The graph $G$ is very well-covered.
\item[(ii)] There is a perfect matching in $G$  that satisfies Property (P).
\item[(iii)] There exists at least one perfect matching in $G$, and every perfect matching in $G$ satisfies Property (P).
\end{enumerate}
\end{thm}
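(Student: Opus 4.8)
The plan is to establish the three conditions cyclically, proving (i) $\Rightarrow$ (iii) $\Rightarrow$ (ii) $\Rightarrow$ (i); the implication (iii) $\Rightarrow$ (ii) is immediate. The engine of the whole argument is a single counting observation that I would isolate first: if $G$ has a perfect matching $M$ and $S$ is any independent set with $|S| = \tfrac12 n(G)$, then since $S$ meets each of the $\tfrac12 n(G)$ matched pairs in at most one vertex while $|S|$ already equals the number of pairs, $S$ must meet \emph{every} matched pair in \emph{exactly} one vertex. This fact will be invoked repeatedly once a perfect matching is available.

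For (ii) $\Rightarrow$ (i), suppose $M$ is a perfect matching satisfying Property (P). Since every vertex is matched, $G$ has no isolated vertices. Let $S$ be an arbitrary maximal independent set; I claim $S$ contains exactly one endpoint of each edge of $M$, which forces $|S| = \tfrac12 n(G)$ and hence $i(G) = \alpha(G) = \tfrac12 n(G)$. Independence gives at most one endpoint per matched pair, so it suffices to rule out a pair $\{x, M(x)\}$ disjoint from $S$. If such a pair existed, maximality would produce $y \in S \cap N_G(x)$ with $y \neq M(x)$ and $z \in S \cap N_G(M(x))$; Property (P) applied to $x$ then yields $y \notin N_G(M(x))$ (so $y \neq z$) together with $y \in N_G(z)$, contradicting the independence of $S$.

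For (i) $\Rightarrow$ (iii), I first produce a perfect matching. Fix a maximum independent set $I$ with $|I| = \tfrac12 n(G)$ and set $C = V(G) \setminus I$, so $|I| = |C|$. In the bipartite graph $B$ between $I$ and $C$ carrying the edges of $G$, independence of $I$ gives $N_B(S) = N_G(S)$ for every $S \subseteq I$, and Berge's theorem (applicable since $G$ is well-covered with no isolated vertices) gives $|S| \le |N_G(S)|$. Thus Hall's condition holds, $B$ has a matching saturating $I$, and as $|I| = |C|$ this is a perfect matching of $G$; in particular at least one perfect matching exists. Now let $M$ be \emph{any} perfect matching, fix a vertex $x$ with partner $x' = M(x)$, and take a neighbor $y \neq x'$ of $x$. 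To verify $y \notin N_G(x')$, extend $\{y\}$ to a maximal independent set $S$; well-coveredness gives $|S| = \tfrac12 n(G)$, so by the counting fact $S$ meets the pair $\{x, x'\}$, yet $x \notin S$ (it is adjacent to $y$), forcing $x' \in S$ and hence $y \not\sim x'$. To verify $y \in N_G(z)$ for every $z \in N_G(x')$, the case $z = x$ is the hypothesis $y \sim x$; for $z \neq x$, if $y \not\sim z$ then (using $y \neq z$, which follows from the previous step) $\{y,z\}$ is independent and extends to a maximal $S$ avoiding both $x$ and $x'$, again contradicting the counting fact.

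I expect the main obstacle to be the (i) $\Rightarrow$ (iii) direction, where two points deserve care. First, extracting a perfect matching from very well-coveredness is precisely the step that converts Berge's inequality into Hall's condition on the auxiliary bipartite graph. Second, the verification of Property (P) must be organized so that the two clauses are proved in the correct order, since the non-adjacency clause $y \notin N_G(x')$ is exactly what guarantees $y \neq z$ and thereby licenses the independent-set extension used to establish the second clause.
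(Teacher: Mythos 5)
The paper does not prove Theorem~\ref{thm:vwc-characterization}; it is imported from Favaron~\cite{f-1982} as a known characterization, so there is no internal argument to compare yours against. Judged on its own merits, your proof is correct. The counting observation --- an independent set of cardinality $\frac{1}{2}n(G)$ in a graph with a perfect matching must meet every matched pair exactly once --- is valid and correctly drives both (ii)~$\Rightarrow$~(i) and the verification of Property (P) for an arbitrary perfect matching in (i)~$\Rightarrow$~(iii). The extraction of a perfect matching is also sound: a very well-covered graph is by definition well-covered with no isolated vertices, so the theorem of Berge quoted in Section~\ref{sec:prelim} gives $|S|\le |N_G(S)|$ for every $S$ contained in a maximum independent set $I$, and since $I$ is independent all such neighbors lie in $V(G)\setminus I$, which has the same cardinality as $I$; Hall's theorem then applies. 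You are also right to prove the non-adjacency clause $y\notin N_G(M(x))$ first, since it is precisely what guarantees $y\neq z$ both in the (ii)~$\Rightarrow$~(i) contradiction and in the second clause of Property (P). The one caveat worth noting is that your argument leans on Berge's inequality, an external result of nontrivial depth; since the paper itself records that theorem, this is an acceptable dependency in context, but a fully self-contained proof would need to establish that inequality (or Hall's condition on the bipartition $(I, V(G)\setminus I)$) directly, as Favaron's original treatment does.
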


We will need the following theorem of Topp and Volkmann concerning very well-covered graphs.

\begin{thm}[\cite{TV-1992}]\label{thm:verywcdirect} Let $G$ and $H$ be graphs without isolated vertices. If at least one of $G$ and $H$ is very
well-covered, then the following statements are equivalent:
\begin{enumerate}
\item[(a)] $G\times H$ is well-covered,
\item[(b)] $G\times H$ is very well-covered,
\item[(c)] both $G$ and $H$ are very well-covered.
\end{enumerate}
\end{thm}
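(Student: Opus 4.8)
The plan is to establish the cycle of implications (b) $\Rightarrow$ (a) $\Rightarrow$ (c) $\Rightarrow$ (b). The implication (b) $\Rightarrow$ (a) is immediate, since a very well-covered graph is in particular well-covered. The content of the theorem therefore lies in the other two implications, and the standing hypothesis that at least one factor---say $G$, by commutativity of the direct product---is very well-covered will be used only in proving (a) $\Rightarrow$ (c).

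For (a) $\Rightarrow$ (c), I would invoke Theorem~\ref{thm:wcdirect}. Assuming $G \times H$ is well-covered, part (a) of that theorem gives that $G$ and $H$ are well-covered, while part (b), together with the fact that $G^+ = G$ and $H^+ = H$ (neither factor has isolated vertices), yields $\alpha(G)n(H) = \alpha(H)n(G)$. Thus $G$ and $H$ share a common independence ratio. Since $G$ is very well-covered, that ratio equals $\tfrac12$, so $\alpha(H) = \tfrac12 n(H)$. As $H$ is well-covered, has no isolated vertices, and satisfies $\alpha(H) = \tfrac12 n(H)$, it is by definition very well-covered; hence both factors are, which is (c).

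The substantive implication is (c) $\Rightarrow$ (b), and here I would use Favaron's characterization (Theorem~\ref{thm:vwc-characterization}). Fix perfect matchings $M_G$ of $G$ and $M_H$ of $H$, each satisfying Property (P). Define a matching $M$ on $G \times H$ by $M(g,h) = (M_G(g), M_H(h))$. Because $gM_G(g) \in E(G)$ and $hM_H(h) \in E(H)$, each pair $(g,h)(M_G(g),M_H(h))$ is an edge of the direct product, and since $M_G$ and $M_H$ are fixed-point-free involutions so is $M$; thus $M$ is a perfect matching of $G \times H$ (which has no isolated vertices). It then remains to verify Property (P) for $M$.

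To check Property (P) at a vertex $x = (g,h)$, write $g^* = M_G(g)$ and $h^* = M_H(h)$, so that $M(x) = (g^*,h^*)$, and take a neighbor $(a,b)$ of $x$ with $(a,b) \neq (g^*,h^*)$; thus $a \in N_G(g)$ and $b \in N_H(h)$. I would split into the three cases according to whether $a = g^*$, $b = h^*$, or neither, applying Property (P) of $M_G$ to $a$ (when $a \neq g^*$) and of $M_H$ to $b$ (when $b \neq h^*$). In each case the failure of one coordinate to be adjacent to the corresponding starred coordinate shows $(a,b) \notin N(M(x))$, while the ``$y \in N(z)$ for every $z \in N(M(x))$'' half of Property (P) propagates coordinatewise to give $(a,b) \in N(z)$ for every neighbor $z$ of $M(x)$. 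The main obstacle is precisely this coordinatewise bookkeeping: one must be careful that when, say, $a = g^*$, the required adjacency $a z_1 \in E(G)$ for a neighbor $z = (z_1,z_2)$ of $M(x)$ comes not from Property (P) but directly from $z_1 \in N_G(g^*) = N_G(a)$, with the $H$-coordinate then supplied by Property (P) of $M_H$. Once all three cases are dispatched, Favaron's theorem yields that $G \times H$ is very well-covered, completing (c) $\Rightarrow$ (b) and hence the cycle.
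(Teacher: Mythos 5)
The paper does not prove this theorem at all: it is imported verbatim from Topp and Volkmann~\cite{TV-1992}, so there is no in-paper argument to compare against. Judged on its own, your proof is correct. The implication (b)~$\Rightarrow$~(a) is indeed trivial; (a)~$\Rightarrow$~(c) follows cleanly from Theorem~\ref{thm:wcdirect} exactly as you say (with $G^+=G$, $H^+=H$ the two factors share an independence ratio, which must be $\tfrac12$ since one factor is very well-covered, and a well-covered graph with no isolated vertices attaining $\alpha=\tfrac12 n$ is very well-covered by definition); and there is no circularity, since the paper proves Theorem~\ref{thm:wcdirect} from Lemma~\ref{lem:inverseimage} alone. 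For (c)~$\Rightarrow$~(b), the product matching $M(g,h)=(M_G(g),M_H(h))$ is a perfect matching of $G\times H$, and your three-case verification of Property~(P) goes through: when $a\neq M_G(g)$ and $b\neq M_H(h)$ both coordinates are handled by Property~(P) of the factors; when $a=M_G(g)$ (or symmetrically $b=M_H(h)$), the non-adjacency $y\notin N(M(x))$ comes for free from simplicity ($g^*\notin N_G(g^*)$) or from the other coordinate, and the adjacency to every $z\in N(M(x))$ uses $z_1\in N_G(g^*)=N_G(a)$ in the first coordinate and Property~(P) of $M_H$ in the second, exactly as you flag. Favaron's Theorem~\ref{thm:vwc-characterization} then closes the cycle. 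This is a complete and arguably more structural proof than a direct counting argument, at the cost of relying on the full strength of Favaron's characterization.
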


Because of Theorem~\ref{thm:verywcdirect} the general problem of characterizing well-covered direct products that are not very well-covered
is reduced to characterizing those pairs of well-covered graphs $G$ and $H$, neither of which is very well-covered, but whose direct product $G \DP H$
is well-covered.

%%%%%%%%%%%%%%%%%%%%%%%%%%%%%%%%%%%%%%%%%%%%%%%%%%%
\section{Products of the form $G\times K_n$} \label{sec:onefactorcomplete}
%%%%%%%%%%%%%%%%%%%%%%%%%%%%%%%%%%%%%%%%%%%%%%%%%%%

Suppose that $I$ is a maximal independent set in $G \DP H$ and that $g$ is a vertex of $G$ such that $I \cap {\LSs{g}{H}} \not=\emptyset$ but that $I \cap {\LSs{g}{H}} \not= {\LSs{g}{H}}$.  Let $(g,h)\in {\LSs{g}{H}- (I \cap {\LSs{g}{H}})}$.  Since $I$ is a dominating set of $G \DP H$ and $\LSs{g}{H}$ is independent, it follows that there exists $g' \in N_G(g)$ and $h'\in N_H(h)$ such that $(g',h') \in I$.  Furthermore, such a vertex $h'$ does not belong to $N_H(p_H(I \cap {\LSs{g}{H}}))$.  However, it is possible that $h' \in p_H(I \cap {\LSs{g}{H}})$.

Consider now the special case $G \DP K_n$ for $n \ge 2$.  Let $V(K_n)=[n]$.

\begin{lem} \label{lem:sizeoflayers}
Let $n \ge 2$ and let $G$ be any graph.  If $I$ is any maximal independent set of $G \DP K_n$, then $\left| I \cap {\LSs{g}{K_n}} \right | \in \{0,1,n\}$, for any $g \in V(G)$.
\end{lem}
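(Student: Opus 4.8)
The plan is to fix a vertex $g \in V(G)$, set $k = |I \cap \LSs{g}{K_n}|$, and record the set $S = \{h \in [n] : (g,h) \in I\}$ of second coordinates used by $I$ inside the layer $\LSs{g}{K_n}$. Since distinct vertices of a single layer have distinct second coordinates, the projection is injective on the layer and $|S| = k$. The cases $k = 0$ and $k = n$ need no argument (and $k = n$ is genuinely consistent because, as already noted, every layer is independent), so the entire content is to rule out intermediate sizes. Concretely, I would prove that $1 \le k \le n-1$ forces $k = 1$, which together with the two trivial cases gives $k \in \{0,1,n\}$.

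The key is to use the maximality of $I$ together with the completeness of $K_n$. Assume $1 \le k \le n-1$. Then $S \neq [n]$, so there is some $h \in [n]\setminus S$, yielding a vertex $(g,h) \notin I$. First I would invoke maximality: a maximal independent set is an independent dominating set, so $(g,h)$ must have a neighbor $(g',h') \in I$. By the definition of adjacency in the direct product $G \DP K_n$, this means $gg' \in E(G)$ and $h' \neq h$.

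Next I would play this single dominator $(g',h')$ against every chosen vertex of the layer. For each $s \in S$ we have $(g,s) \in I$, and independence forbids $(g,s)$ from being adjacent to $(g',h')$. Here the completeness of $K_n$ does the work: since $gg' \in E(G)$, the vertices $(g,s)$ and $(g',h')$ are adjacent precisely when $s \neq h'$. Hence non-adjacency forces $s = h'$, and since this holds for every $s \in S$ we conclude $S \subseteq \{h'\}$, so $k = |S| \le 1$. Combined with $k \ge 1$ this gives $k = 1$, completing the dichotomy.

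I do not anticipate a genuine obstacle; the argument is short. The two points to state carefully are the translation of \emph{maximality} into \emph{domination} (so that an unchosen layer vertex is guaranteed a neighbor inside $I$) and the use of the hypothesis $H = K_n$: the conceptual crux is that a single vertex $(g',h')$ with $gg' \in E(G)$ is adjacent to all but at most one vertex of the layer $\LSs{g}{K_n}$, and therefore cannot simultaneously avoid two distinct chosen vertices of that layer. This is exactly where completeness, rather than an arbitrary second factor, is essential.
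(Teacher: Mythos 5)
Your proof is correct and takes essentially the same approach as the paper: both arguments use maximality to produce a neighbor $(g',h')\in I$ of an unchosen vertex in the layer $\LSs{g}{K_n}$ and then exploit the completeness of $K_n$ to conclude that $(g',h')$ is adjacent to every layer vertex whose second coordinate differs from $h'$, so at most one layer vertex can lie in $I$ when the layer is not entirely contained in $I$. Your phrasing (showing $S\subseteq\{h'\}$ directly) even avoids the paper's separate treatment of the case $n=2$, but the underlying idea is identical.
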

\begin{proof} If $n=2$, then the conclusion is obvious.  Assume $n\ge 3$ and suppose for the sake of contradiction that $\left| I \cap {\LSs{g}{K_n}} \right |=m$ for some $2 \le m <n$.  Assume without loss of generality that $\{(g,1),(g,2)\} \subseteq I$.  Let $i \in [n]$ such that $(g,i) \not\in I$. As above, there exists $g' \in N_G(g)$ and $j \in N_{K_n}(i)$ such that $(g',j)\in I$.  Since $n \ge 3$, we infer that $j \not=1$ or $j \not =2$.  This implies that $(g',j) \in N(\{(g,1),(g,2)\})$, which contradicts the independence of $I$.  Therefore,
$\left| I \cap {\LSs{g}{K_n}} \right | \in \{0,1,n\}$.   \end{proof}

For an arbitrary positive integer $n \ge 2$ and a maximal independent set $I$ of $G \DP K_n$, we can use Lemma~\ref{lem:sizeoflayers} to define a weak partition of $V(G)$.  In particular, $V_0,V_1,\ldots,V_n,V_{[n]}$ defined by
\begin{enumerate}
\item[{\bf(a)}] $V_0=\{ g \in V(G)\,|\, I \cap {\LSs{g}{K_n}} = \emptyset\}$;
\item[{\bf(b)}] $V_k=\{ g \in V(G)\,|\, I \cap {\LSs{g}{K_n}} =\{(g,k)\} \}$ for $k \in [n]$;
\item[{\bf(c)}] $V_{[n]}=\{ g \in V(G)\,|\, I \cap {\LSs{g}{K_n}}={\LSs{g}{K_n}} \}$.
\end{enumerate}
is a weak partition.  Furthermore, the following four conditions hold.
\begin{enumerate}
\item For $k \in [n]$, if $u \in V_k$ and $v \in V(G)-(V_0 \cup V_k)$, then $uv \not\in E(G)$.
\item For $k \in [n]$, if $V_k$ is not empty, then no vertex of $V_k$ is isolated in $G[V_k]$.
\item The set $V_{[n]}$ is independent in $G$.
\item For each $g \in V_0$, either $N_G(g) \cap V_{[n]} \not=\emptyset$ or $g$ has a neighbor in at least two of the sets $V_1,\ldots,V_n$.
\end{enumerate}

If we have a weak partition of $V(G)$ that satisfies these four conditions, then it is clear how to construct a maximal independent set of $G \DP K_n$.
Thus we have a way to define $i(G \DP K_n)$ and $\alpha(G \DP K_n)$ in terms of such partitions.
\[i(G \DP K_n)= \min\{n\cdot |V_{[n]}|+ \sum_{k=1}^n|V_k|\}\hskip 1cm \alpha(G \DP K_n)= \max\{n\cdot |V_{[n]}|+ \sum_{k=1}^n|V_k|\},\]
where the minimum and maximum values are computed over all weak partitions \\
$V_0,V_1,\ldots,V_n,V_{[n]}$ that satisfy conditions $1{-}4$ above.

The next lemma gives a necessary condition on a graph $G$ for the direct product of $G$ and a complete graph to be well-covered.
\begin{lem} \label{lem:necessary3}
Let $n$ be a positive integer, $n \ge 2$.  If $G \DP K_n$ is well-covered, then for every $x\in V(G)$ such that $\deg(x) \ge n$ the
graph $G - N[x]$ has at least one isolated vertex.
\end{lem}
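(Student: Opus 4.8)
The plan is to argue by contradiction. Suppose $G \DP K_n$ is well-covered, that $x \in V(G)$ has $\deg_G(x) \ge n$, and yet $G' := G - N[x]$ has no isolated vertex. I would first record a small bookkeeping fact: any isolated vertex $z$ of $G$ would satisfy $z \notin N[x]$ (since $x$ has positive degree and $z$ has none), so $z$ survives into $G'$ and is still isolated there. Hence the standing assumption forces $G$ itself to have no isolated vertices, so both $G$ and $G'$ are isolate-free, and $G$ is well-covered by Theorem~\ref{thm:wcdirect}(a).

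The key reduction is to perform the neighborhood removal at the level of the product. Because $n \ge 2$, for each $j' \in [n]$ there is some $j \ne j'$, so every vertex $(g',j')$ with $g' \in N_G(x)$ is adjacent to $(x,j) \in \LSs{x}{K_n}$; thus the closed neighborhood in $G \DP K_n$ of the independent $H$-layer $\LSs{x}{K_n}$ is exactly $N_G[x] \DP [n]$, and consequently $(G \DP K_n) - N[\LSs{x}{K_n}] = G' \DP K_n$. By Lemma~\ref{lem:basic} this graph is well-covered. I now apply Theorem~\ref{thm:wcdirect}(b) twice, using $n(K_n)=n$ and $\alpha(K_n)=1$. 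Since $G$ is isolate-free we have $G = G^+$, so the identity $\alpha(G^+)\,n(K_n) = \alpha(K_n)\,n(G^+)$ reads $\alpha(G) = n(G)/n$; applying the same to $G' \DP K_n$, and using that $G'$ is isolate-free, gives $\alpha(G') = n(G')/n$.

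Finally I would relate the two independence numbers directly inside $G$. Taking any maximal independent set $M'$ of $G'$, the set $\{x\} \cup M'$ is independent in $G$ and is maximal, since $M'$ dominates $G'$ and $x$ dominates $N[x]$; well-coveredness of both $G$ and $G'$ then yields $\alpha(G) = \alpha(G') + 1$. Substituting the two ratio identities together with $n(G') = n(G) - \deg_G(x) - 1$ gives $n(G)/n = 1 + \bigl(n(G) - \deg_G(x) - 1\bigr)/n$, which simplifies to $\deg_G(x) = n - 1$, contradicting $\deg_G(x) \ge n$. I expect the only delicate points to be the two housekeeping steps, namely verifying that the hypothesis rules out isolated vertices of $G$ (so that Theorem~\ref{thm:wcdirect}(b) can be applied without passing to $G^+$) and computing $N[\LSs{x}{K_n}]$ correctly; the closing arithmetic is routine.
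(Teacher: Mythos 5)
Your proof is correct, but it takes a genuinely different route from the paper's. The paper proves the contrapositive by directly exhibiting a small maximal independent set of $G \DP K_n$: using the weak-partition framework it takes $V_{[n]}=\{x\}$, $V_0=N(x)$, $V_1=V(G)-N[x]$, so that $i(G\DP K_n)\le n+n(G)-\deg(x)-1\le n(G)-1<n(G)\le\alpha(G\DP K_n)$ (the last inequality because a $G$-layer is independent). Your argument instead removes $N[\LSs{x}{K_n}]$ from the product, identifies the remainder as $(G-N[x])\DP K_n$, and plays the independence-ratio identity of Theorem~\ref{thm:wcdirect}(b) for $G$ against that for $G-N[x]$, together with $\alpha(G)=\alpha(G-N[x])+1$, to force $\deg(x)=n-1$. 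All the steps check out: your layer-neighborhood computation is right, $\LSs{x}{K_n}$ is independent so Lemma~\ref{lem:basic} applies, your preliminary observation that the hypothesis rules out isolated vertices of $G$ (and hence of $G-N[x]$) legitimizes both applications of Theorem~\ref{thm:wcdirect}(b), and the extension $\{x\}\cup M'$ is indeed a maximal independent set of $G$. What the paper's approach buys is self-containment and a sharper quantitative conclusion about the product itself (an explicit gap between $i$ and $\alpha$ of $G\DP K_n$), using only the partition conditions; what yours buys is brevity and the stronger local consequence $\deg(x)=n-1$, which foreshadows the regularity statement of Corollary~\ref{cor:regular}. Since Theorem~\ref{thm:wcdirect} and Lemma~\ref{lem:basic} both precede this lemma in the paper, there is no circularity.
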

\begin{proof}
We prove the contrapositive.  Suppose $x$ is a vertex in $G$ of degree at least $n$ such that $G - N[x]$ has minimum degree at least 1.  We define
a weak partition $V_0,V_1,\ldots,V_n,V_{[n]}$ as follows.  Let $V_0=N(x)$, let $V_1=V(G)-N[x]$, let $V_i=\emptyset$ for $2 \le i \le n$, and let $V_{[n]}=\{x\}$.
Since $G - N[x]$ has minimum degree at least 1, it is easy to check that this weak partition satisfies conditions $1{-}4$ above. Furthermore,
$\deg(x) \ge n$, and hence,
\[i(G \DP K_n) \le n +|V(G)-N[x]| \le n(G)-1 <n(G) \le \alpha(G \DP K_n)\,.\]
This shows that $G \DP K_n$ is not well-covered.
\end{proof}

Using Lemma~\ref{lem:necessary3} we now use the context of direct products to prove a general result about bipartite well-covered graphs.

\begin{cor} \label{cor:bipartite}
If $B$ is a bipartite, well-covered graph with minimum degree at least $2$, then $B$ has isolatable vertices.  In fact, for any vertex $x$ of $B$,
the induced subgraph $B-N[x]$ has an isolated vertex.
\end{cor}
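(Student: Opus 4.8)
The plan is to reduce the statement to Lemma~\ref{lem:necessary3} applied to the product $B \DP K_2$. The key preliminary observation is the remark made just after Berge's theorem: a well-covered bipartite graph with no isolated vertices is very well-covered, since both partite sets are maximal independent sets and therefore have the common cardinality $\frac{1}{2}n(B)$. Because $B$ has minimum degree at least $2$, it has no isolated vertices, and hence $B$ is very well-covered.

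Next I would invoke Theorem~\ref{thm:verywcdirect}. The factor $K_2$ is itself very well-covered, as $\alpha(K_2)=1=\frac{1}{2}n(K_2)$ and $K_2$ has no isolated vertices. Since both $B$ and $K_2$ are graphs without isolated vertices and at least one of them (indeed both) is very well-covered, the equivalence of (a) and (c) in Theorem~\ref{thm:verywcdirect} guarantees that $B \DP K_2$ is well-covered.

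With $B \DP K_2$ established as well-covered, I would apply Lemma~\ref{lem:necessary3} with $n=2$: for every vertex $x$ of $B$ with $\deg(x) \ge 2$, the induced subgraph $B - N[x]$ has at least one isolated vertex. Since the minimum degree of $B$ is at least $2$, every vertex $x$ of $B$ satisfies $\deg(x) \ge 2$, so $B - N[x]$ has an isolated vertex for \emph{every} $x$, which is exactly the ``in fact'' clause of the statement. Finally, any vertex $y$ that is isolated in $B - N[x]$ is isolatable in $B$ by definition, witnessed by the independent set $I = \{x\}$; hence $B$ has isolatable vertices.

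There is no genuine obstacle here, since every step is a direct appeal to an earlier result; the one point requiring care is checking that the hypotheses align. The minimum-degree condition does double duty: it forbids isolated vertices, which is what lets us conclude $B$ is very well-covered, and it simultaneously forces $\deg(x) \ge n = 2$ for every vertex $x$, which is precisely the hypothesis under which Lemma~\ref{lem:necessary3} yields an isolated vertex in $B - N[x]$ for all $x$. Assembling these observations completes the argument.
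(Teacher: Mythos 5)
Your proposal is correct and follows essentially the same route as the paper's own proof: establish that $B$ is very well-covered (being bipartite, well-covered, and without isolated vertices), deduce via Theorem~\ref{thm:verywcdirect} that $B \DP K_2$ is well-covered, and then apply Lemma~\ref{lem:necessary3} with $n=2$ using the minimum-degree hypothesis. The only difference is that you spell out the hypothesis checks (no isolated vertices, $\deg(x)\ge 2$ for every $x$) that the paper leaves implicit.
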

\begin{proof}
Suppose $B$ is bipartite, well-covered and $\delta(B) \ge 2$.  The graph $B$ is very well-covered since it is bipartite and well-covered.
By Theorem~\ref{thm:verywcdirect}, $B \DP K_2$ is very well-covered.  For any $x \in V(B)$, it follows from Lemma~\ref{lem:necessary3} that
$B-N[x]$ has at least one isolated vertex.  \end{proof}

%%%%%%%%%%%%%%%%%%%%%%%%%%%%%%%%%%%%%%%%%%%%%
\section{Factors with no isolatable vertices} \label{sec:noisolatables}
%%%%%%%%%%%%%%%%%%%%%%%%%%%%%%%%%%%%%%%%%%%%%

As mentioned in \cite{FHN-1993}, when classifying well-covered graphs one particularly useful property of a graph is whether or not
it contains isolatable vertices. We first consider direct products where at least one of the factor graphs does not contain isolatable
vertices.

\begin{lem} \label{lem:structure-no-isolatables}
Let $H$ be a nontrivial, connected graph and let $G$ be a graph with no isolatable vertices such that $G \times H$ is well-covered.
If $k\in [\alpha(G)]$ and $A$ is any independent set of $G$ such that $|A|=k$, then $|N[A]|=k\frac{n(G)}{\alpha(G)}$.
\end{lem}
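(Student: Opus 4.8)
The plan is to reduce the claim to a statement about independence ratios and then exploit the well-coveredness of a suitable sub-product of $G\times H$. First observe that since $G$ has no isolatable vertices it has no isolated vertices, and since $H$ is nontrivial and connected it too has no isolated vertices; hence Theorem~\ref{thm:wcdirect}(b) applies to $G\times H$ and gives $\alpha(G)n(H)=\alpha(H)n(G)$, so $G$ and $H$ share the common independence ratio $\frac{\alpha(G)}{n(G)}=\frac{\alpha(H)}{n(H)}$. Writing $G'=G-N[A]$, I would first record two facts that together turn the target equality into a statement about $G'$: by Lemma~\ref{lem:basic} the graph $G'$ is well-covered, and extending $A$ by a maximal independent set of $G'$ produces a maximal independent set of $G$, whence $\alpha(G')=\alpha(G)-k$. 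Since $|N[A]|=n(G)-n(G')$, the desired identity $|N[A]|=k\frac{n(G)}{\alpha(G)}$ is equivalent to $n(G')=\frac{n(G)}{\alpha(G)}\bigl(\alpha(G)-k\bigr)$, that is, to $G'$ having the same independence ratio as $G$.

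The heart of the argument, and the step I expect to be the main obstacle, is to establish this last equation by proving that $G'\times H$ is itself well-covered. The key observation is that $A\times V(H)$ is an independent set of $G\times H$ whose closed neighborhood is exactly $N_G[A]\times V(H)$: because $H$ has no isolated vertices, every vertex $v$ of $H$ has a neighbor, so a vertex $(u,v)$ is dominated by $A\times V(H)$ precisely when $u\in N_G[A]$. Consequently $(G\times H)-N[A\times V(H)]$ is exactly the induced sub-product $(G-N[A])\times H=G'\times H$. Since $G\times H$ is well-covered, Lemma~\ref{lem:basic} applied inside $G\times H$ shows $G'\times H$ is well-covered. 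The delicate points here are to verify the closed-neighborhood computation carefully and to note that $G'$ inherits the no-isolated-vertices property from the hypothesis that $G$ has no isolatable vertices, since an isolated vertex of $G-N[A]$ would be precisely an isolatable vertex of $G$ witnessed by the independent set $A$.

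Finally I would combine these observations. If $G'$ is empty, then $A$ is a maximal independent set, so $k=\alpha(G)$ and $|N[A]|=n(G)=k\frac{n(G)}{\alpha(G)}$, as required. Otherwise $G'$ is a nonempty graph with no isolated vertices for which $G'\times H$ is well-covered, so Theorem~\ref{thm:wcdirect}(b) yields $\frac{\alpha(G')}{n(G')}=\frac{\alpha(H)}{n(H)}=\frac{\alpha(G)}{n(G)}$. Feeding $\alpha(G')=\alpha(G)-k$ into this equality gives $n(G')=\frac{n(G)}{\alpha(G)}\bigl(\alpha(G)-k\bigr)$, which is exactly the equivalent form isolated in the first paragraph, and the claimed value of $|N[A]|$ follows.
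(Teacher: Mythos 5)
Your proposal is correct and follows essentially the same route as the paper: remove $N[A\times V(H)] = N_G[A]\times V(H)$ to obtain $G'\times H$, invoke Lemma~\ref{lem:basic} and Theorem~\ref{thm:wcdirect} to equate the independence ratios of $G'$, $H$, and $G$, use the no-isolatable-vertices hypothesis to guarantee $\delta(G')\ge 1$, and combine with $\alpha(G')=\alpha(G)-k$. The only difference is cosmetic (you dispose of the degenerate case $G'=\emptyset$ at the end, while the paper handles $k=\alpha(G)$ and $G$ complete up front).
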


\begin{proof}
Assume $G$ and $H$ are as in the hypothesis of the theorem, let $1 \le k \le \alpha(G)$ and let $A$ be an independent set of $G$ such
that $|A|=k$.  For $k=\alpha(G)$ the conclusion holds since any independent set of $G$ that has cardinality $\alpha(G)$ dominates $G$.
If $G$ is a clique, then $k=1$ and again the conclusion follows.  Hence, we may assume that $G$ is not a complete graph.
Since both $G$ and $H$ have minimum degree at least 1, it follows from  Theorem~\ref{thm:wcdirect}
that $\alpha(G)n(H) = \alpha(H)n(G)$ and also that both $G$ and $H$ are well-covered.
Since $\delta(H) \ge 1$, $G\times H - N[A \times V(H)]=G' \times H$, where $G'=G-N[A]$. Since $A$ is independent, Lemma~\ref{lem:basic}
implies that $G'$ is well-covered, and since $G$ has no isolatable vertex we infer that $\delta(G') \ge 1$.  Furthermore, $A \times V(H)$
is independent in $G \DP H$, and thus by Lemma~\ref{lem:basic} we conclude that $G' \DP H$ is well-covered. Applying
Theorem~\ref{thm:wcdirect} again gives
\[\frac{\alpha(G)}{n(G)} = \frac{\alpha(H)}{n(H)} =  \frac{\alpha(G')}{n(G')}.\]
Note that since $G$ is well-covered, there exists a maximum independent set of $G$ that contains $A$ and this implies that $\alpha(G')=\alpha(G)-k$.
Thus,
\[\frac{\alpha(G)}{n(G)} = \frac{\alpha(G')}{n(G')}=\frac{\alpha(G)-k}{n(G)-|N[A]|}\,,\]
 which implies $|N[A]|=k\frac{n(G)}{\alpha(G)}$.
\end{proof}

A special case of Lemma~\ref{lem:structure-no-isolatables} is when $k=1$, which yields the following corollary.
\begin{cor} \label{cor:regular}
Let $G$ and $H$ be nontrivial, connected graphs such that $G$ has no isolatable vertex.
If $G \times H$ is well-covered, then $G$ is a regular graph of degree $\frac{n(G)}{\alpha(G)}-1$.
\end{cor}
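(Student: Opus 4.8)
The plan is to read off the corollary as the instance $k=1$ of Lemma~\ref{lem:structure-no-isolatables}, so almost all of the work is already done. First I would check that the hypotheses of the lemma are met and that $k=1$ is in range. Since $G$ and $H$ are nontrivial and connected and $G$ has no isolatable vertex, the standing assumptions of Lemma~\ref{lem:structure-no-isolatables} hold verbatim. Because $G$ is nontrivial we have $\alpha(G)\ge 1$, so $1\in[\alpha(G)]$ and the case $k=1$ is legitimate.

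Next I would observe that for \emph{every} vertex $x\in V(G)$, the singleton $A=\{x\}$ is an independent set with $|A|=1$. Applying Lemma~\ref{lem:structure-no-isolatables} with this $A$ yields
\[
|N[x]|=|N[\{x\}]|=1\cdot\frac{n(G)}{\alpha(G)}=\frac{n(G)}{\alpha(G)}\,.
\]
Since the closed neighborhood of a single vertex satisfies $|N[x]|=\deg(x)+1$, I would rearrange this to obtain
\[
\deg(x)=\frac{n(G)}{\alpha(G)}-1\,.
\]

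Finally I would note that the right-hand side $\frac{n(G)}{\alpha(G)}-1$ does not depend on the chosen vertex $x$. Hence every vertex of $G$ has the same degree, namely $\frac{n(G)}{\alpha(G)}-1$, which is exactly the assertion that $G$ is regular of degree $\frac{n(G)}{\alpha(G)}-1$. (As a side remark worth recording, this forces $\frac{n(G)}{\alpha(G)}$ to be an integer, since it equals $\deg(x)+1$.) There is really no substantive obstacle here: the entire content is carried by Lemma~\ref{lem:structure-no-isolatables}, and the only things to verify are that a singleton is always an independent set of size one and that the resulting common value of $|N[x]|$ is independent of the vertex $x$, which makes $G$ regular.
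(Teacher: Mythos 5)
Your proof is correct and takes exactly the paper's route: the corollary is presented there precisely as the $k=1$ instance of Lemma~\ref{lem:structure-no-isolatables}, applied to the singleton independent sets $\{x\}$, giving $|N[x]|=\frac{n(G)}{\alpha(G)}$ for every vertex and hence regularity of degree $\frac{n(G)}{\alpha(G)}-1$. Your verification of the hypotheses and of $1\in[\alpha(G)]$ is all that is needed.
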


Using Corollary~\ref{cor:regular} we can now easily establish the following result.
\begin{cor} \label{cor:withk3}
Let $G$ be a nontrivial, connected graph.  If $G \times K_3$ is well-covered, then $G=K_3$ or $G$ has at least one isolatable vertex.
\end{cor}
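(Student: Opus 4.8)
The plan is to prove the contrapositive: assuming $G$ is nontrivial and connected, has no isolatable vertex, and $G \times K_3$ is well-covered, I will show that $G = K_3$. The first step is a degree bound. Suppose some vertex $x$ had $\deg(x) \ge 3$. Then Lemma~\ref{lem:necessary3} (applied with $n = 3$) guarantees that $G - N[x]$ contains an isolated vertex $v$. But $I = \{x\}$ is an independent set with $G - N[I] = G - N[x]$, so $v$ would be isolatable, contradicting the hypothesis on $G$. Hence $\Delta(G) \le 2$.

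Next I would invoke Corollary~\ref{cor:regular}: since $G$ and $K_3$ are nontrivial connected graphs, $G$ has no isolatable vertex, and $G \times K_3$ is well-covered, $G$ is regular of degree $\frac{n(G)}{\alpha(G)} - 1$. Because $G$ is connected and nontrivial its common degree $d$ satisfies $d \ge 1$, and by the previous paragraph $d \le 2$. A connected $1$-regular graph is $K_2$, and a connected $2$-regular graph is a cycle $C_m$ with $m \ge 3$, so it remains only to treat these two cases.

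For $d = 1$ (so $G = K_2$), note that $K_2$ is bipartite and well-covered with $\alpha = \tfrac12 n$, hence very well-covered; by Theorem~\ref{thm:verywcdirect}, a well-covered product $K_2 \times K_3$ would force $K_3$ to be very well-covered, which is false since $\alpha(K_3) = 1 \ne \tfrac32$. So $d = 1$ cannot occur. For $d = 2$ (so $G = C_m$), Corollary~\ref{cor:regular} gives $\frac{n(G)}{\alpha(G)} = 3$, that is, $\alpha(C_m) = m/3$; since $\alpha(C_m) = \lfloor m/2 \rfloor$, the equation $\lfloor m/2 \rfloor = m/3$ forces $m = 3$. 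Therefore $G = C_3 = K_3$, completing the contrapositive.

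The only genuinely substantive step is the first one: recognizing that Lemma~\ref{lem:necessary3} combined with the absence of isolatable vertices rules out every vertex of degree at least $3$, via the witness $I = \{x\}$. After that, the classification of connected regular graphs of degree at most $2$ together with the independence-ratio constraint from Corollary~\ref{cor:regular} does essentially all of the remaining work, the lone extra case $K_2$ being dispatched by the very-well-covered dichotomy of Theorem~\ref{thm:verywcdirect}.
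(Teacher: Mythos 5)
Your proof is correct, and its backbone is the same as the paper's: both arguments pivot on Corollary~\ref{cor:regular} to force $G$ to be a connected regular graph of very low degree and then rule out everything but $K_3$. The differences are minor but worth noting. The paper gets $2$-regularity in one step, since Theorem~\ref{thm:wcdirect} forces $\frac{\alpha(G)}{n(G)}=\frac13$ and Corollary~\ref{cor:regular} then gives degree $\frac{n(G)}{\alpha(G)}-1=2$; you instead derive $\Delta(G)\le 2$ from Lemma~\ref{lem:necessary3} (correctly observing that an isolated vertex of $G-N[x]$ is isolatable via the witness $I=\{x\}$), which leaves you the extra case $G=K_2$, dispatched via Theorem~\ref{thm:verywcdirect} --- though you could have killed it immediately from the ratio condition $\frac{\alpha(K_2)}{n(K_2)}=\frac12\neq\frac13$. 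For the cycle case the paper quotes the classification of connected well-covered $2$-regular graphs ($K_3$, $C_4$, $C_5$, $C_7$) and filters by independence ratio, whereas you compute $\alpha(C_m)=\lfloor m/2\rfloor$ directly and solve $\lfloor m/2\rfloor=m/3$; your version is marginally more self-contained since it needs no well-coveredness of cycles. Either way the argument closes, and there is no gap.
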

\begin{proof}
Assume that $G \times K_3$ is well-covered and that $G$ has no isolatable vertex.
Since $G \times K_3$ is well-covered, Theorem~\ref{thm:wcdirect} implies $G$ is well-covered and that the independence ratio,
$\frac{\alpha(G)}{n(G)}$, is $1/3$.  By Corollary~\ref{cor:regular} we see that $G$ is $2$-regular.  However, the only
connected $2$-regular graphs that are well-covered are $K_3, C_4, C_5$ and $C_7$.  Of these, only $K_3$ has independence ratio $1/3$.
\end{proof}

We now have the results necessary to give a partial characterization of well-covered direct products in which at least one of the factors
has no isolatable vertices.

\begin{thm} \label{thm:noisolatable}
Let $G$ and $H$ be nontrivial, connected graphs such that the direct product $G \times H$ is well-covered.  If $G$
has no isolatable vertices, then $G$ is a complete graph.
\end{thm}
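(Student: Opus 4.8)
The plan is to extract rigid local structure from Lemma~\ref{lem:structure-no-isolatables} by applying it only to independent sets of size one and two. First I would dispose of the trivial case: if $\alpha(G)=1$, then $G$ has no pair of nonadjacent vertices and is already complete. So assume $\alpha(G)\ge 2$ and abbreviate $r=\frac{n(G)}{\alpha(G)}$. The case $k=1$ of Lemma~\ref{lem:structure-no-isolatables} (equivalently Corollary~\ref{cor:regular}) gives $|N[v]|=r$ for every vertex $v$, so $G$ is $(r-1)$-regular; this records the size of each closed neighborhood, which is what the counting step below will need.

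The crux is the case $k=2$. Since $\alpha(G)\ge 2$, there exist nonadjacent vertices $u$ and $v$, and $\{u,v\}$ is an independent set of size $2$, so Lemma~\ref{lem:structure-no-isolatables} yields $|N[\{u,v\}]|=2r$. Combining this with $|N[u]|=|N[v]|=r$ via inclusion--exclusion forces the overlap to vanish:
\[|N[u]\cap N[v]| = |N[u]|+|N[v]|-|N[\{u,v\}]| = r+r-2r = 0.\]
Hence any two nonadjacent vertices of $G$ have disjoint closed neighborhoods, and in particular they have no common neighbor.

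The last condition is exactly the statement that $G$ contains no induced path on three vertices: an induced $P_3$ is precisely a vertex adjacent to two vertices that are themselves nonadjacent, i.e.\ a common neighbor of a nonadjacent pair. A graph with no induced $P_3$ is a disjoint union of complete graphs, so since $G$ is connected it must be a single complete graph, which completes the argument. I would present this final implication either through the $P_3$-free characterization or, equivalently, by noting that disjoint closed neighborhoods of nonadjacent pairs make every neighborhood a clique, and a connected graph all of whose neighborhoods are cliques is complete.

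The main obstacle is conceptual rather than computational: one must recognize that the innocuous-looking additive identity $|N[A]|=k\,r$ supplied by Lemma~\ref{lem:structure-no-isolatables} is strong enough, already at $k=2$, to rule out any overlap between closed neighborhoods of nonadjacent vertices. Once that rigidity is identified the rest is the routine characterization of cluster graphs, and it is worth observing that no further use of $H$ is needed beyond the hypotheses that Lemma~\ref{lem:structure-no-isolatables} and Corollary~\ref{cor:regular} have already absorbed.
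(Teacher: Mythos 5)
Your proof is correct, but it takes a genuinely different route from the paper's. The paper argues by contradiction at the top end of the scale: it takes a maximum independent set $M$ containing an arbitrary vertex $x$, applies Lemma~\ref{lem:structure-no-isolatables} with $k=\alpha(G)-1$ to $M-\{x\}$, invokes Lemma~\ref{lem:cliqueleftover} to see that $G-N[M-\{x\}]$ is a clique of order exactly $|N[x]|$ containing $x$, and then uses regularity to conclude that $N[x]$ is an entire component, contradicting connectedness unless $G$ is complete. You instead work at the bottom end, with $k=2$: for any nonadjacent pair $u,v$ the identity $|N[\{u,v\}]|=2r$ together with $|N[u]|=|N[v]|=r$ and inclusion--exclusion forces $N[u]\cap N[v]=\emptyset$, so nonadjacent vertices have no common neighbor, $G$ is $P_3$-free, and connectedness finishes the job. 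Your argument is shorter, avoids Lemma~\ref{lem:cliqueleftover} and the contradiction framing entirely, and isolates the real rigidity in the lemma (additivity of $|N[\cdot]|$ over independent sets already at size $2$); the paper's version, on the other hand, makes visible the global structure $G-N[M-\{x\}]=N[x]$, which echoes the clique-partition picture used elsewhere in Section~\ref{sec:noisolatables}. Both proofs rest on the same two pillars (Lemma~\ref{lem:structure-no-isolatables} and the regularity from Corollary~\ref{cor:regular}), so yours is a legitimate streamlining rather than a new method.
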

\begin{proof}
Let $G$ and $H$ be connected graphs with minimum degree at least 1 such that $G \DP H$ is well-covered and
assume that $G$ has no isolatable vertices.  Suppose that $G$ is not a complete graph.  That is, suppose that $s=\alpha(G)-1 \ge 1$.  Let $x$ be any vertex of $G$.
Since $G$ is well-covered (by Theorem~\ref{thm:wcdirect}), we can find a maximal (in fact a maximum) independent set of $G$ that contains $x$.  Let $M$ be
such a maximum independent set.  By Lemma~\ref{lem:structure-no-isolatables}, $|N[M-\{x\}]|=s\frac{n(G)}{\alpha(G)}=s\frac{n(G)}{s+1}$,
and by Corollary~\ref{cor:regular},
\[|V(G)-N[M-\{x\}]|=n(G)-s\frac{n(G)}{s+1}=\frac{n(G)}{s+1}=\frac{n(G)}{\alpha(G)}=|N[x]|\,.\]
In addition, by Lemma~\ref{lem:cliqueleftover}, $G-N[M-\{x\}]$ is a clique, it contains $x$, and it has order $|N[x]|$.  By Corollary~\ref{cor:regular} $G$ is regular,
and  this implies that $N[x]$ is a component of  $G$.  Since $G$ is connected, we conclude that $G$ is complete, which is a contradiction.
\end{proof}

Using Theorem~\ref{thm:noisolatable} we can completely characterize direct products of connected graphs in which neither factor has an isolatable vertex.

\begin{cor} \label{cor:bothcomplete}
Let $G$ and $H$ be nontrivial, connected graphs, neither of which has an isolatable vertex.  If $G \DP H$ is well-covered, then $G=H=K_{n(G)}$.
\end{cor}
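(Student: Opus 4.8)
The plan is to invoke Theorem~\ref{thm:noisolatable} once for each factor and then pin down the common order via the independence-ratio equality recorded in Theorem~\ref{thm:wcdirect}. Since essentially all the work has already been done in Theorem~\ref{thm:noisolatable}, this is a short deduction rather than a fresh argument.

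First I would apply Theorem~\ref{thm:noisolatable} directly: $G$ and $H$ are nontrivial and connected, $G\DP H$ is well-covered, and $G$ has no isolatable vertices, so $G=K_{n(G)}$. Next, because the direct product is commutative, $H\DP G$ is also well-covered, and $H$ is likewise nontrivial, connected, and free of isolatable vertices; applying Theorem~\ref{thm:noisolatable} with the roles of the two factors interchanged gives $H=K_{n(H)}$.

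It remains to force $n(G)=n(H)$. Since $G$ and $H$ are connected and nontrivial, neither has an isolated vertex, so $G^{+}=G$ and $H^{+}=H$. Theorem~\ref{thm:wcdirect}(b) then yields $\alpha(G)\,n(H)=\alpha(H)\,n(G)$. But $G$ and $H$ are complete, so $\alpha(G)=\alpha(H)=1$, and the equality collapses to $n(H)=n(G)$. Hence $G=H=K_{n(G)}$, as claimed. (Equivalently, one could note that the equal independence ratios $\tfrac{\alpha(G)}{n(G)}=\tfrac{\alpha(H)}{n(H)}$ become $\tfrac{1}{n(G)}=\tfrac{1}{n(H)}$.)

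I do not expect any genuine obstacle in this corollary: the substantive content lives entirely in Theorem~\ref{thm:noisolatable}, and the only point that requires a moment's care is to remember to extract the matching of the orders from Theorem~\ref{thm:wcdirect}(b) rather than stopping once both factors are known to be complete.
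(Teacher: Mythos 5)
Your proposal is correct and follows the paper's own argument exactly: apply Theorem~\ref{thm:noisolatable} to each factor (using commutativity of $\times$) to conclude both are complete, then use the equal independence ratios from Theorem~\ref{thm:wcdirect} to force $n(G)=n(H)$. No issues.
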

\begin{proof}
Since neither $G$ nor $H$ has an isolatable vertex, it follows from Theorem~\ref{thm:noisolatable} that both $G$ and $H$ are complete graphs.
By Theorem~\ref{thm:wcdirect}, the independence ratios of $G$ and $H$ are equal.  Consequently, $G$ and $H$ have the same order.
\end{proof}
Thus, classifying all well-covered direct products when \emph{exactly} one of the factor graphs does not contain isolatable vertices reduces to the
study of well-covered direct products of the form $K_n \times G$.  Using Lemma~\ref{lem:sizeoflayers} and the partition approach in Section~\ref{sec:onefactorcomplete},
it is easy to show that for any integer $r\ge 2$, the direct product $K_3 \times K_{r,r,r}$ is well-covered.   (This generalizes to $K_n \times K_{r,\ldots,r}$ being
well-covered, where the second factor is a complete $n$-partite graph.)
To see that finding a characterization of those $G$ such that $G \times K_n$ is well-covered is a nontrivial problem, consider the following infinite
class of graphs. Let $k$ and $n$ be  positive integers.  Form a graph $H(k,n)$ of order $k(n+1)$ by starting with the
disjoint union of $K_{kn}$ and an independent  set $\{z_1,\ldots,z_k\}$.  Partition the vertices of $K_{kn}$ into subsets $A_1,\ldots,A_k$ each
of cardinality $n$.  Finally, add edges to make the open neighborhood of $z_i$ in $H(k,n)$ be $A_i$, for each $i \in [k]$.
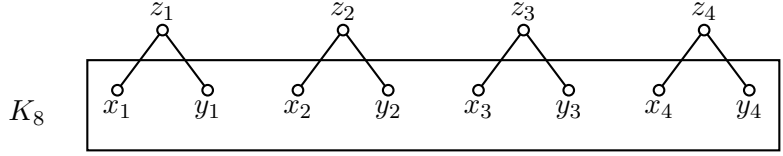
\begin{figure}[ht!]
\begin{center}
\begin{tikzpicture}[scale=0.4,style=thick]
\def\vr{5pt} % \vr = vertex radius;  Set \vr = 2/scale for uniform sizing of vertices
%%%
%%% Vertices Spine
\path (0,0) coordinate (x1); \path (3,0) coordinate (y1); \path (6,0) coordinate (x2); \path (9,0) coordinate (y2);
\path (12,0) coordinate (x3); \path (15,0) coordinate (y3); \path (18,0) coordinate (x4); \path (21,0) coordinate (y4);
\path (1.5,2) coordinate (z1); \path (7.5,2) coordinate (z2); \path (13.5,2) coordinate (z3); \path (19.5,2) coordinate (z4);
%%% Vertices Leaves

%%% Draw edges
\draw (z1) -- (x1); \draw (z1) -- (y1); \draw (z2) -- (x2); \draw (z2) -- (y2);
\draw (z3) -- (x3); \draw (z3) -- (y3);  \draw (z4) -- (x4); \draw (z4) -- (y4);
\draw (-1,-2) rectangle (22,1);
%
%%% Draw vertices
\foreach \i in {1,...,4}
{  \draw (x\i)  [fill=white] circle (\vr); \draw (y\i)  [fill=white] circle (\vr); \draw (z\i)  [fill=white] circle (\vr);}

%%%  Label vertices on Spine
\draw[anchor=south] (z1) node{$z_1$}; \draw[anchor=south] (z2) node{$z_2$}; \draw[anchor=south] (z3) node{$z_3$}; \draw[anchor=south] (z4) node{$z_4$};
\draw[anchor=north] (x1) node{$x_1$}; \draw[anchor=north] (y1) node{$y_1$}; \draw[anchor=north] (x2) node{$x_2$}; \draw[anchor=north] (y2) node{$y_2$};
\draw[anchor=north] (x3) node{$x_3$}; \draw[anchor=north] (y3) node{$y_3$}; \draw[anchor=north] (x4) node{$x_4$}; \draw[anchor=north] (y4) node{$y_4$};

\draw (-3,-.75) node{$K_8$};

\end{tikzpicture}
\end{center}
\caption{The graph $H(4,2)$.} \label{fig:H(4,2)}
\end{figure}

The example in Figure~\ref{fig:H(4,2)} is $H(4,2)$.  For the special case when $n=1$, the resulting graphs $H(k,1)$ are coronas.
If $k=1$, then $H(1,n)=K_{n+1}$.  All of these graphs are split graphs, and because of their structure it is easy to show that $H(k,n)$ is well-covered
with $\alpha(H(k,n))=k$.

\begin{prop} \label{prop:example}
For each pair of positive integers $n$ and $k$,  the graph $H(k,n) \times K_{n+1}$ is well-covered.
\end{prop}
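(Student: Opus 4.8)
The plan is to work inside the weak-partition framework of Section~\ref{sec:onefactorcomplete}, applied with the complete factor $K_{n+1}$, so the colour set is $[n+1]$. Write $G=H(k,n)$ and recall that $V(G)$ is the disjoint union of the clique $K_{kn}=A_1\cup\cdots\cup A_k$ and the independent set $\{z_1,\dots,z_k\}$, where $N_G(z_i)=A_i$, $|A_i|=n$, and $\alpha(G)=k$. By Lemma~\ref{lem:inverseimage} applied to a maximum independent set of $G$, the product $G\times K_{n+1}$ has a maximal independent set of size $k(n+1)$, and Corollary~\ref{cor:triviallower} gives $\alpha(G\times K_{n+1})\ge k(n+1)$. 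Hence it suffices to prove that every weak partition $V_0,V_1,\dots,V_{n+1},V_{[n+1]}$ of $V(G)$ satisfying conditions 1--4 has $(n+1)|V_{[n+1]}|+\sum_{c=1}^{n+1}|V_c|=k(n+1)$; this forces $i(G\times K_{n+1})=\alpha(G\times K_{n+1})=k(n+1)$, so the product is well-covered.

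First I would record the two local rules behind the conditions: for an edge $uv$ of $G$, conditions 1 and 3 say the layers $I\cap\LSs{u}{K_{n+1}}$ and $I\cap\LSs{v}{K_{n+1}}$ cannot both be nonempty unless they are equal singletons, so no vertex of $V_{[n+1]}$ has a coloured neighbour and adjacent singletons share a colour; condition 4 says a vertex of $V_0$ has either a neighbour in $V_{[n+1]}$ or neighbours of at least two distinct singleton colours. Because $A_1\cup\cdots\cup A_k$ induces a clique, these rules force (i) at most one clique vertex lies in $V_{[n+1]}$, and if one does then every other clique vertex lies in $V_0$; and (ii) if no clique vertex lies in $V_{[n+1]}$, then all coloured clique vertices are singletons of one common colour $c$. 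In case (i), say $v\in A_i$ lies in $V_{[n+1]}$: then all of $K_{kn}-\{v\}$ and $z_i$ lie in $V_0$, each dominated by the full layer over $v$; for $j\ne i$ the only neighbours of $z_j$ are the vertices of $A_j\subseteq V_0$, so placing $z_j$ in $V_0$ or making it a singleton would leave a vertex of $\LSs{z_j}{K_{n+1}}$ undominated, whence $z_j\in V_{[n+1]}$. Counting gives $|V_{[n+1]}|=k$ and $\sum_c|V_c|=0$, for the required total $k(n+1)$.

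In case (ii) the hard part is to pin down the colour pattern on the clique, and I would show that each $A_i$ is either entirely coloured $c$ or entirely uncoloured. If $A_i$ contained both a colour-$c$ vertex and an uncoloured vertex $u\in V_0$, then $u$ would need a second colour among its neighbours by condition 4; every coloured clique neighbour of $u$ has colour $c$, so this second colour could only come from $z_i$, yet $z_i$ is adjacent to the colour-$c$ vertex of $A_i$ and so can be neither full nor a singleton of a colour $\ne c$, a contradiction. Let $P=\{i:A_i\text{ is coloured }c\}$. For $i\in P$, the only neighbours of $z_i$ lie in the colour-$c$ set $A_i$, so to dominate $\LSs{z_i}{K_{n+1}}$ the vertex $z_i$ must itself be the singleton $c$, contributing $n+1$ vertices of colour $c$; for $i\notin P$ the vertices of $A_i\subseteq V_0$ must be dominated, their only possible non-$c$ coloured neighbour is $z_i$, and a singleton (or $V_0$) value for $z_i$ would leave the rest of its own layer undominated, so $z_i\in V_{[n+1]}$. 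Thus $\sum_c|V_c|=(n+1)|P|$ and $|V_{[n+1]}|=k-|P|$, giving once more $(n+1)(k-|P|)+(n+1)|P|=k(n+1)$. Combining the two cases shows every maximal independent set has size $k(n+1)$.

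The main obstacle is the all-or-nothing colouring step in case (ii): the argument hinges on each uncoloured clique vertex of $A_i$ having exactly one neighbour outside the clique, namely $z_i$, so the second colour demanded by condition 4 is unavailable precisely when $A_i$ is partially coloured. Once this dichotomy is established, the forced values of the $z_i$ are determined by the layer-domination requirement, and the remaining verification — that these assignments genuinely satisfy conditions 1--4 and that every uncoloured layer is dominated — is routine bookkeeping.
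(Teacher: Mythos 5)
Your proof is correct, and it is in fact more complete than the one in the paper: the published proof works in the same weak-partition framework of Section~\ref{sec:onefactorcomplete}, but it only verifies the single instance $H(4,2)\times K_3$ in detail, organizing the case analysis around the location of $z_1$ (in $V_0$, in some $V_i$, or in $V_{[3]}$) and then asserting that the proof for arbitrary $n$ and $k$ ``is similar but has more cases and is omitted.'' You supply a uniform argument for all $k$ and $n$ by splitting instead on whether the clique $A_1\cup\cdots\cup A_k$ meets $V_{[n+1]}$. The two observations that make your version work in general --- that in the second case all coloured clique vertices must share one colour because the clique is complete, and that each $A_i$ is then either entirely coloured $c$ or entirely contained in $V_0$ (since an uncoloured vertex of a partially coloured $A_i$ has $z_i$ as its only neighbour outside the clique, condition~1 pins $z_i$ to $V_0\cup V_c$, and hence condition~4 would fail) --- are exactly what replaces the paper's finite enumeration, and the forced placement of each $z_i$ then follows from conditions~2 and~4 as you say. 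Two minor remarks, neither of which is a gap: the opening appeal to Lemma~\ref{lem:inverseimage} and Corollary~\ref{cor:triviallower} is not needed, since showing that every weak partition arising from a maximal independent set has total weight $k(n+1)$ already yields $i(G\times K_{n+1})=\alpha(G\times K_{n+1})$; and you should state explicitly that the degenerate situation in which no clique vertex is coloured at all is handled by your case~(ii) count with $|P|=0$, which forces every $z_i$ into $V_{[n+1]}$.
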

\begin{proof}
We begin by proving the proposition for the case $k=4$ and $n=2$.    For ease of notation
let $H=H(4,2)$ and let $X=\{x_1,x_2,x_3,x_4\}$, $Y=\{y_1,y_2,y_3,y_4\}$ and $Z=\{z_1,z_2,z_3,z_4\}$.
The subgraph induced by $X \cup Y$ is a complete graph, and  it is clear that $\alpha(H)=4$.
Let $V_0,V_1,V_2,V_3,V_{[3]}$ be a partition of $V(H)$ that satisfies conditions $(1)-(4)$ in Section~3.
We claim that for any such partition, $3\cdot |V_{[3]}|+ \sum_{i=1}^3|V_i|=12$;  that is, we claim that
\[i(H\DP K_3)=\alpha(H \DP K_3)=12\,,\]
and thus that $H \DP K_{2+1}$ is well-covered.

Suppose first that $V_0 \cap Z \neq \emptyset$.  Without loss of generality we assume that $z_1\in V_0$.  By condition (4), either
$N(z_1) \cap V_{[3]}\neq \emptyset$ or for some pair of distinct indices $i,j$, $x_1 \in V_i$ and $y_1 \in V_j$.  This latter condition
is not possible by condition (1).  Suppose, then, without loss of generality, that $x_1 \in V_{[3]}$.  This implies that
$(X \cup Y)-\{x_1\} \subseteq V_0$ by conditions (1) and (3).  Using conditions (2) and (4) we conclude that
$V_{[3]}=Z$, and the claim is established in this case.

Suppose next that $V_0 \cap Z=\emptyset$.  In this case suppose first that $z_1 \in V_1$.  By condition (2), $V_1 \cap \{x_1,y_1\} \neq \emptyset$, and
then by condition (1) $\{x_2,y_2,x_3,y_3,x_4,y_4\} \subseteq V_0 \cup V_1$.  This latter conclusion implies (by condition (4)) that $\{x_1,y_1\} \subseteq V_1$.
Let $2 \le j \le 4$.  If $\{x_j,y_j\} \cap V_1 \neq \emptyset$, then $\{x_j,y_j,z_j\} \subseteq V_1$.  On the other hand, if $\{x_j,y_j\} \cap V_1 = \emptyset$,
then $\{x_j,y_j\} \subseteq V_0$ and $z_j \in V_{[3]}$.  We infer that $3\cdot |V_{[3]}|+ \sum_{i=1}^3|V_i|=12$.  A symmetric argument shows $3\cdot |V_{[3]}|+ \sum_{i=1}^3|V_i|=12$
holds if $z_1 \in V_2 \cup V_3$.

Finally, suppose that $z_1 \in V_{[3]}$.  By conditions (1) and (4), we see that $\{x_1,y_1\} \subseteq V_0$.
If $\{z_2,z_3,z_4\} \subseteq V_{[3]}$, then $X \cup Y=V_0$, and we get $3\cdot |V_{[3]}|+ \sum_{i=1}^3|V_i|=12$.  On the other hand, if for example $z_2 \in V_1$,
then $\{x_2,y_2\} \subseteq V_1$ and for each $j\in \{3,4\}$ we have either $\{x_j,y_j,z_j\} \subseteq V_1$ or ($z_j \in V_{[3]}$ and $\{x_j,y_j\}\subseteq V_0$).
Again in this case we conclude that $3\cdot |V_{[3]}|+ \sum_{i=1}^3|V_i|=12$.

Having examined all possible cases we conclude that $H(4,2) \times K_{2+1}$ is well-covered.  The proof for arbitrary $n$ and $k$ is similar but has more cases and
is omitted.  \end{proof}

\begin{prob}
Let $n$ be a positive integer.  Find a characterization of the class, $\cal{C}$, of all connected graphs $G$
such that $G$ has an isolatable vertex and $G \DP K_n$ is well-covered.
\end{prob}

%%%%%%%%%%%%%%%%%%%%%%%%%%%%%%%%%
\section{General direct products} \label{sec:general}
%%%%%%%%%%%%%%%%%%%%%%%%%%%%%%%%%

In the previous section we characterized well-covered direct products of connected graphs when neither factor has an
isolatable vertex.  Note that if connected graphs $G$ and $H$ both have girth at least 4 and also have no isolatable vertices, then
it follows from Corollary~\ref{cor:bothcomplete} that $G=H=K_2$.  In the main result of this section we make no assumptions about
the girth of the factors  and no assumptions about whether the factors have isolatable vertices.  We show that if $G$ and $H$
are nontrivial connected graphs whose direct product is well-covered but not very well-covered, then both $G$ and $H$ have girth 3.

\begin{lem} \label{lem:nobipartite}
Let $G$ and $H$ be nontrivial, connected graphs such that $G \DP H$ is well-covered but not very well-covered.  If
$I$ is any independent set of $G$ and $B$ is any bipartite component of $G-N[I]$, then $B=K_1$.
\end{lem}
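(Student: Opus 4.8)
The plan is to argue by contradiction: I assume some bipartite component $B$ of $G - N[I]$ has at least two vertices and derive that $H$ must be very well-covered, contradicting a consequence of the hypotheses. So the first task is to extract that consequence. Since $G \times H$ is well-covered, Theorem~\ref{thm:wcdirect} gives that $G$ and $H$ are well-covered; being nontrivial and connected, neither has isolated vertices. Moreover, because $G \times H$ is well-covered but \emph{not} very well-covered, Theorem~\ref{thm:verywcdirect} forces $H$ to fail to be very well-covered: if $H$ were very well-covered, then since $G \times H$ is well-covered the equivalence (a)$\Leftrightarrow$(b) of that theorem would make $G \times H$ very well-covered. The non-very-well-coveredness of $H$ is precisely the statement I will contradict.

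Next I set $G' = G - N[I]$. Since $G$ is well-covered and $I$ is independent, Lemma~\ref{lem:basic} shows $G'$ is well-covered, hence so is each of its components, in particular $B$. Suppose toward a contradiction that $B \neq K_1$. Then $B$ is connected with at least two vertices, so $\delta(B) \ge 1$, and as a bipartite well-covered graph with no isolated vertices $B$ is very well-covered (as recorded in Section~\ref{sec:prelim}). The goal is then to produce a well-covered product having $B$ as a very-well-covered factor.

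The crucial step is to transfer well-coveredness to a product. Since $I$ is independent in $G$, the set $I \times V(H)$ is independent in $G \times H$; and because $\delta(H) \ge 1$, one verifies that a vertex $(u,v)$ lies in $N[I \times V(H)]$ exactly when $u \in N[I]$, so that $G \times H - N[I \times V(H)] = G' \times H$. By Lemma~\ref{lem:basic}, $G' \times H$ is well-covered. Since the direct product distributes over disjoint unions, $B \times H$ is a union of whole components of $G' \times H$ and is therefore itself well-covered. Now apply Theorem~\ref{thm:verywcdirect} to the pair $B, H$: both have no isolated vertices, $B$ is very well-covered, and $B \times H$ is well-covered, so the theorem yields that $H$ is very well-covered. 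This contradicts the second paragraph, and so $B = K_1$.

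The main obstacle I anticipate is the bookkeeping in the crucial step, rather than any conceptual difficulty. Two points need care: confirming that deleting $N[I \times V(H)]$ leaves precisely $G' \times H$ (this is exactly where $\delta(H) \ge 1$ is needed, to guarantee each $v \in V(H)$ has a neighbor that witnesses domination), and confirming that $B \times H$ is a union of \emph{entire} components of $G' \times H$, so that both Lemma~\ref{lem:basic} and Theorem~\ref{thm:verywcdirect} apply verbatim. Once these identifications are in place, the contradiction is immediate.
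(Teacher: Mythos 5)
Your proof is correct and follows essentially the same route as the paper: remove $N[I\times V(H)]$ to obtain $G'\times H$, conclude that $B\times H$ is well-covered, note that $B$ is very well-covered as a nontrivial bipartite well-covered graph, and contradict the fact that $H$ is not very well-covered via Theorem~\ref{thm:verywcdirect}. The only (immaterial) difference is that the paper first enlarges $I$ to an independent set $J$ with $G-N[J]=B$ before taking the product, whereas you keep $I$ and observe that $B\times H$ is a union of entire components of $G'\times H$; both handle the reduction to $B\times H$ correctly.
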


\begin{proof}
Let $G$ and $H$ be nontrivial, connected graphs such that $G \DP H$ is well-covered but not very well-covered. By Theorems~\ref{thm:wcdirect}
and~\ref{thm:verywcdirect}, $G$ and $H$ are well-covered but neither $G$ nor $H$ is very well-covered.
Suppose the lemma is not true.
Let $I$ be independent in $G$ and let $B$ be a nontrivial bipartite component of $G-N[I]$.  By choosing a maximal independent
set in each of the other components of $G-N[I]$ besides $B$  (if there are any), and adding them to $I$ we get an independent set $J$
such that $B=G-N[J]$.
Since $J \times V(H)$ is independent and $H$ has no isolated vertices,
\[G \DP H -N[J \times V(H)]=(G - N[J]) \DP H = B \DP H\,.\]
It follows from Lemma~\ref{lem:basic} and Theorem~\ref{thm:wcdirect} that  $B \DP H$ and $B$ are well-covered.  Since $B$ is bipartite, we
infer that $B$ is very well-covered.  As a result, $H$ is very well-covered by Theorem~\ref{thm:verywcdirect}, which is a contradiction.
Therefore, $B=K_1$.
\end{proof}

\begin{thm} \label{thm:everyedge}
Let $G$ and $H$ be nontrivial, connected graphs such that $G \DP H$ is well-covered but not very well-covered. Every
edge of $G$ is incident with a triangle.
\end{thm}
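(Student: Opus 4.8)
I argue by contradiction. Suppose $uv$ is an edge of $G$ that lies in no triangle, so that $N(u)\cap N(v)=\emptyset$. Write $A=N(u)\setminus\{v\}$ and $C=N(v)\setminus\{u\}$ (disjoint, with no edges joining either set to the opposite endpoint), and set $\rho=\frac{\alpha(G)}{n(G)}$; since $G\times H$ is well-covered but not very well-covered, Theorems~\ref{thm:wcdirect} and~\ref{thm:verywcdirect} give that $G,H$ are well-covered, neither is very well-covered, and $\rho=\frac{\alpha(H)}{n(H)}<\frac12$. The plan is to derive a contradiction with the well-coveredness of $G\times H$, using two complementary mechanisms: Lemma~\ref{lem:nobipartite} (forbidding nontrivial bipartite leftovers) and a layer/ratio argument driven by Lemma~\ref{lem:basic} and Theorem~\ref{thm:wcdirect}. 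I would phrase the whole argument as an induction on $n(G)$, with $H$ quantified over all admissible factors.

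\textbf{Reduction toward the edge.} To keep both $u$ and $v$ alive in $G-N[I]$ one must take $I\subseteq V(G)\setminus(N[u]\cup N[v])$. Among all such independent $I$, choose one minimizing the order of the component $B$ of $G-N[I]$ that contains $uv$. If some $x\in V(B)$ lies outside $N[u]\cup N[v]$, then $x\notin N[I]$ and $x\not\sim u,v$, so $I\cup\{x\}$ is again admissible and strictly shrinks $B$; hence $V(B)\subseteq\{u,v\}\cup A\cup C$. Relative to the bipartition $\bigl(\{u\}\cup(C\cap V(B)),\,\{v\}\cup(A\cap V(B))\bigr)$, every edge of $B$ is a cross edge \emph{except} possible edges inside $A\cap V(B)$ or $C\cap V(B)$; thus $B$ is bipartite precisely when those two sets are independent. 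If $B$ is bipartite it is a nontrivial bipartite component of $G-N[I]$, contradicting Lemma~\ref{lem:nobipartite}. Otherwise, extending $I$ to kill all other components exhibits $B=G-N[J]$ with $B\times H=(G\times H)-N[J\times V(H)]$ well-covered (Lemma~\ref{lem:basic}) and, by Theorem~\ref{thm:verywcdirect}, not very well-covered; so $(B,H)$ is a smaller instance and induction finishes $uv$ \emph{unless} $B=G$, i.e.\ unless $W:=V(G)\setminus(N[u]\cup N[v])=\emptyset$.

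\textbf{The diameter-two base case.} It remains to treat $G=N[u]\cup N[v]$, where the bipartite route genuinely fails (e.g.\ when $A,C$ are cliques, $G-N[I]$ is always a union of sub-cliques). Here I would invoke the full-layer device: the $H$-layer $\LSs{u}{H}$ is independent, and because $H$ has no isolated vertices, $N[\LSs{u}{H}]=N[u]\times V(H)$, so $(G\times H)-N[\LSs{u}{H}]=(G-N[u])\times H=G[C]\times H$ is well-covered by Lemma~\ref{lem:basic}; symmetrically $G[A]\times H=(G-N[v])\times H$ is well-covered. Assuming first that $G[A]$ and $G[C]$ have no isolated vertices, Theorem~\ref{thm:wcdirect} forces $\frac{\alpha(G[A])}{|A|}=\frac{\alpha(G[C])}{|C|}=\rho$; since $G$ is well-covered, extending $u$ (resp.\ $v$) to a maximum independent set gives $\alpha(G[C])=\alpha(G[A])=\alpha(G)-1$, whence $|A|=|C|=\frac{\alpha(G)-1}{\rho}$. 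Substituting into $n(G)=2+|A|+|C|$ and $\rho=\frac{\alpha(G)}{n(G)}$ yields $\frac{2-\alpha(G)}{\alpha(G)}=\frac{2}{n(G)}>0$, forcing $\alpha(G)\le 1$, i.e.\ $G$ complete and $uv$ in a triangle---a contradiction.

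\textbf{Main obstacle.} The hard part is exactly the case excluded above: $G-N[u]$ or $G-N[v]$ may acquire isolated vertices (the isolatable-vertex phenomenon central to this paper), and then Theorem~\ref{thm:wcdirect} only pins the ratio of the \emph{non-isolated} part, so the clean count weakens to $1+|F_0(u)|+\rho\,n\bigl((G-N[u])^+\bigr)=\rho\,n(G)$ and its symmetric companion, which are individually consistent. I expect to resolve this by exploiting that an isolated vertex $a$ of $G-N[v]$ satisfies $N(a)\subseteq\{u\}\cup C$, so the edge $ua$ is \emph{also} triangle-free with $N(u)\cap N(a)=\emptyset$; this either lets me re-run the selection on $ua$ or feeds a weighted version of the ratio identity (tracking the isolated-vertex counts explicitly) until the over-determination reappears. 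Carrying out this bookkeeping---showing that the isolatable vertices cannot be arranged to balance both the $u$-side and $v$-side equations simultaneously---is where the real work lies.
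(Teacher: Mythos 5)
Your proposal is not complete: the final paragraph explicitly concedes that the case where $G-N[u]$ or $G-N[v]$ acquires isolated vertices is unresolved (``where the real work lies''), and that case genuinely occurs --- for instance, in the graphs $H(k,n)$ of Proposition~\ref{prop:example} the graph obtained by deleting the closed neighborhood of a clique vertex consists entirely of isolated vertices --- so the ratio computation in your ``base case'' does not go through as written, and the weighted bookkeeping you sketch is never carried out.

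The deeper point is that the obstacle is self-inflicted. The conclusion the paper proves --- and all that Corollary~\ref{cor:girth4} requires --- is that at least one \emph{endpoint} of the edge $uv$ lies in a triangle, not that $uv$ itself is an edge of a triangle (your opening sentence negates the stronger reading). With the intended reading, your ``Reduction toward the edge'' paragraph already finishes the proof: you have shown $V(B)\subseteq\{u,v\}\cup A\cup C$ with $A\cap C=\emptyset$, that the only possible non-cross edges relative to the bipartition $\bigl(\{u\}\cup(C\cap V(B)),\{v\}\cup(A\cap V(B))\bigr)$ lie inside $A\cap V(B)$ or inside $C\cap V(B)$, and that $B$ cannot be bipartite by Lemma~\ref{lem:nobipartite}. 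Hence some edge joins two vertices of $A\subseteq N(u)$ or two vertices of $C\subseteq N(v)$, i.e.\ there is a triangle through $u$ or through $v$, and you are done --- no induction, no diameter-two base case, and no tracking of isolated vertices. This is precisely the paper's argument: it takes $I$ to be a maximal independent set of $G-N[\{u,v\}]$, so that $F=G-N[I]$ is a connected subgraph contained in $N[u]\cup N[v]$, invokes Lemma~\ref{lem:nobipartite} to conclude $F$ is not bipartite, and observes that consequently $F-N[v]$ or $F-N[u]$ contains an edge, producing the required triangle at $u$ or at $v$.
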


\begin{proof}
Let $G$ and $H$ be graphs satisfying the hypotheses of the theorem.  Let $xy$ be an arbitrary edge of $G$.  Note
that $G$ and $H$ are both well-covered but neither is very well-covered.
If $xy$ is in a triangle of $G$, then there is nothing to prove.  Suppose then that $N(x) \cap N(y)=\emptyset$.
Let $I$ be any maximal independent set of the graph $G-N[\{x,y\}]$ and let $F=G-N[I]$.  Note that $F$ is connected.  By Lemmas~\ref{lem:basic}
and~\ref{lem:nobipartite}, $F$ is well-covered and $F$ is not bipartite.  Furthermore, if $y$ is a leaf of $F$, then $\{x\}$ is a maximal independent
set of $F$, but $\{y\}$ is independent but not maximal independent in $F$, which contradicts the fact that $F$ is well-covered.
Thus, $y$ (and similarly $x$) is not a leaf in $F$.   Let $F_x=F-N[y]$ and let $F_y=F-N[x]$.  Since $F$ is not bipartite,
at least one of $F_x$ or $F_y$ contains an edge.  Consequently, at least one of $x$ or $y$ is in a triangle.
\end{proof}

The following corollary follows immediately from Theorem~\ref{thm:everyedge}.

\begin{cor} \label{cor:girth4}
Let $G$ and $H$ be nontrivial, connected graphs. If $G\times H$ is well-covered but not very well-covered, then both $G$
and $H$ have girth $3$.
\end{cor}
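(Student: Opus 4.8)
The plan is to read off the conclusion directly from Theorem~\ref{thm:everyedge}, since the corollary is essentially a restatement of ``every edge lies in a triangle'' in the language of girth. First I would observe that because $G$ is nontrivial and connected, it has at least two vertices, and hence at least one edge, say $e = xy$. Theorem~\ref{thm:everyedge} applies under exactly the stated hypotheses (both factors nontrivial and connected, with $G \times H$ well-covered but not very well-covered), so $e$ is incident with a triangle. A triangle is a cycle of length $3$, and no shorter cycle exists in a simple graph, so this forces the girth of $G$ to be $3$.

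For the factor $H$, the key point is that the direct product is commutative, so $G \times H$ and $H \times G$ are isomorphic and the hypotheses are symmetric in the two factors. Thus Theorem~\ref{thm:everyedge} applies with the roles of $G$ and $H$ interchanged, yielding that every edge of $H$ is also incident with a triangle. Since $H$ is likewise nontrivial and connected, it too has an edge, and the same reasoning as for $G$ gives that $H$ has girth $3$. Combining the two halves completes the argument.

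There is no genuine obstacle here: all of the substantive combinatorial work is already discharged in Theorem~\ref{thm:everyedge} (and, upstream, in Lemma~\ref{lem:nobipartite} and the very well-covered machinery of Theorem~\ref{thm:verywcdirect}). The only thing to verify is the elementary implication that ``a graph possessing at least one edge, each of whose edges is incident with a triangle, contains a $3$-cycle and therefore has girth $3$,'' which is immediate. Accordingly I would present this as a two-line consequence, explicitly noting the appeal to commutativity of $\times$ to cover $H$, rather than repeating any part of the proof of Theorem~\ref{thm:everyedge}.
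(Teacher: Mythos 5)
Your proposal is correct and matches the paper exactly: the paper states that the corollary ``follows immediately from Theorem~\ref{thm:everyedge}'' and gives no further argument, and your spelled-out deduction (an edge exists, every edge lies in a triangle, a triangle gives girth $3$, and symmetry/commutativity of $\times$ handles $H$) is precisely that immediate consequence.
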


From the above result, classifying all well-covered direct products where both factors contain isolatable vertices reduces to the study of
well-covered direct products where both factors have girth $3$ and every edge of $G$ (and of $H$) is incident with a triangle. We now show the existence
of graphs $G$ and $H$ each with girth $3$ and containing isolatable vertices neither of which are very well-covered such that $G\times H$ is well-covered.
The following lemma will be used in the subsequent result.  Its proof is immediate.
\begin{lem} \label{lem:twins}
Let $I$ be a maximal independent set in a graph $G$.  If $u$ and $v$ are two vertices with $N_G(u)=N_G(v)$, then either
$I \cap N_G(u) \not = \emptyset$ or $\{u,v\} \subseteq I$.
\end{lem}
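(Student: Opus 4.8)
The plan is to argue by a simple dichotomy based on whether $I$ meets the common neighborhood. First I would record the elementary observation that $N_G(u)=N_G(v)$ forces $u$ and $v$ to be nonadjacent: if $uv$ were an edge then $u\in N_G(v)=N_G(u)$, giving a loop at $u$, which is impossible in a simple graph. This is what makes the conclusion $\{u,v\}\subseteq I$ consistent with the independence of $I$.

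The heart of the argument uses the characterization recalled in Section~\ref{sec:prelim}, namely that a maximal independent set is precisely an independent dominating set, so every vertex outside $I$ has a neighbor in $I$. I would suppose that the first alternative fails, that is, $I\cap N_G(u)=\emptyset$. Since $N_G(u)=N_G(v)$, this immediately gives $I\cap N_G(v)=\emptyset$ as well, so neither $u$ nor $v$ has a neighbor in $I$. Because $I$ dominates $G$, any vertex having no neighbor in $I$ must itself belong to $I$; applying this to $u$ and to $v$ yields $u\in I$ and $v\in I$, and hence $\{u,v\}\subseteq I$, as desired.

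There is no genuine obstacle here, which is why the paper remarks that the proof is immediate. The only points requiring any care are to invoke the domination property of a maximal independent set (rather than its maximality directly), and to use the hypothesis $N_G(u)=N_G(v)$ to transfer the emptiness of $I\cap N_G(u)$ over to $I\cap N_G(v)$ so that $u$ and $v$ are treated symmetrically in the second case.
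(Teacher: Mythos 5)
Your proof is correct, and since the paper omits the argument entirely (stating only that it is immediate), yours is exactly the standard reasoning intended: if $I\cap N_G(u)=\emptyset$, then by $N_G(u)=N_G(v)$ neither $u$ nor $v$ is dominated by $I$ from outside, so maximality forces both into $I$. Nothing further is needed.
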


As observed in~\cite{TV-1992}, for $n \ge 2$ the graph $K_n \times K_n$ is well-covered. That is, the ``direct product square'' of a nontrivial
complete graph is well-covered.  The following proposition gives another infinite class of graphs whose direct product squares are well-covered but not
very well-covered.  Of course, any such graph (other than a complete graph) must have isolatable vertices by Theorem~\ref{thm:noisolatable}.

\begin{prop}  \label{prop:multipartite}
Let $r$ and $m$ be  positive integers and let $G$ be the complete $m$-partite graph $K_{r,\ldots,r}$.  The
direct product $G \times G$ is well-covered.
\end{prop}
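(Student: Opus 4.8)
The plan is to exploit the abundant twin structure in $G \times G$ and thereby reduce the problem to the known fact that $K_m \times K_m$ is well-covered. Write each vertex of $G = K_{r,\ldots,r}$ as a pair $(i,s)$, where $i \in [m]$ names its part and $s \in [r]$ its position within that part; thus $(i,s)$ and $(j,t)$ are adjacent in $G$ exactly when $i \neq j$. Consequently, the vertices $((i_1,s_1),(i_2,s_2))$ and $((j_1,t_1),(j_2,t_2))$ of $G \times G$ are adjacent if and only if $i_1 \neq j_1$ and $i_2 \neq j_2$; that is, adjacency depends only on the part-coordinates $(i_1,i_2)$. I would record this as the first observation, since everything else follows from it.

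Next I would fix, for each pair $(a,b) \in [m] \times [m]$, the set $T_{(a,b)}$ of the $r^2$ vertices of $G \times G$ whose part-coordinates are $(a,b)$. By the observation, any two vertices in a common $T_{(a,b)}$ are nonadjacent and have equal open neighborhoods, so they are (false) twins. The key step is to show that for every maximal independent set $I$ of $G \times G$ and every $(a,b)$, either $T_{(a,b)} \subseteq I$ or $T_{(a,b)} \cap I = \emptyset$. This is a direct application of Lemma~\ref{lem:twins}: picking two vertices of $T_{(a,b)}$, either their common neighborhood meets $I$ --- in which case no vertex of $T_{(a,b)}$ can lie in $I$ --- or it does not, and then maximality forces the whole (independent) class $T_{(a,b)}$ into $I$.

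With this dichotomy in hand I would pass to the quotient. To a maximal independent set $I$ associate $S(I) = \{(a,b) : T_{(a,b)} \subseteq I\} \subseteq [m] \times [m]$, so that $|I| = r^2\,|S(I)|$. I claim $S(I)$ is a maximal independent set of $K_m \times K_m$, where $[m] \times [m]$ carries the direct-product adjacency $(a,b) \sim (c,d)$ iff $a \neq c$ and $b \neq d$. Independence of $S(I)$ is immediate from the observation, since an edge inside $S(I)$ would yield two adjacent vertices of $I$. For maximality, if $(c,d) \notin S(I)$ then $T_{(c,d)} \cap I = \emptyset$, so by the dichotomy $I$ contains a neighbor of $T_{(c,d)}$; that neighbor lies in some $T_{(a,b)} \subseteq I$ with $a \neq c$ and $b \neq d$, so $(a,b) \in S(I)$ is adjacent to $(c,d)$, and $S(I)$ is maximal.

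Finally I would conclude. Since $K_m \times K_m$ is well-covered (as observed by Topp and Volkmann~\cite{TV-1992}; alternatively one checks directly that its maximal independent sets are exactly the $m$ rows and $m$ columns, each of size $m$), every maximal independent set of $K_m \times K_m$ has the same cardinality $\alpha(K_m \times K_m) = m$. Therefore every maximal independent set $I$ of $G \times G$ satisfies $|I| = r^2\,|S(I)| = r^2 m$, a constant, so $G \times G$ is well-covered. The cases $r = 1$ (where $G = K_m$ and the statement is exactly Topp--Volkmann) and $m = 1$ (where $G$ and $G \times G$ are edgeless) are immediate. I expect the only genuinely delicate point to be the maximality half of the claim that $S(I)$ is a maximal independent set of $K_m \times K_m$; everything else is bookkeeping built on the single adjacency observation.
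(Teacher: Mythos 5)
Your proof is correct, and it takes a genuinely different route from the paper's. Both arguments rest on Lemma~\ref{lem:twins} applied to the $r^2$-element classes of false twins, but the paper uses it to identify the maximal independent sets of $G \times G$ directly: starting from a single vertex $(a_1,a_1) \in I$ it forces $X_1 \times X_1 \subseteq I$ and then, by a short case analysis on which of $X_1 \times X_2$ or $X_2 \times X_1$ meets $I$, concludes that $I$ has the form $X_i \times V(G)$ or $V(G) \times X_i$, hence $|I| = mr^2$ (and the paper only writes this out for $m=3$, declaring the general case ``similar''). You instead establish the all-or-nothing dichotomy for every twin class $T_{(a,b)}$ and use it to pass to the quotient $K_m \times K_m$, reducing the statement to the well-coveredness of $K_m \times K_m$ already recorded in the paper via \cite{TV-1992}. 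Your reduction buys uniformity in $m$ and modularity, at the cost of not explicitly exhibiting the maximal independent sets of $G \times G$, which the paper's version does. One phrasing nit: in your maximality step it is the maximality of $I$, not the dichotomy, that supplies a neighbor of $T_{(c,d)}$ in $I$; the dichotomy is then what promotes that neighbor's entire class into $S(I)$. This is clearly what you intended, and the argument is sound.
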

\begin{proof}
We prove the statement of the proposition for $m=3$ and any $r$.  The proof for an arbitrary $m$ is similar.  Let $V(G)=\{a_1,\ldots,a_{3r}\}$, with
color classes $X_1=\{a_1,\ldots,a_r\}$, $X_2=\{a_{r+1},\ldots,a_{2r}\}$ and $X_3=\{a_{2r+1},\ldots,a_{3r}\}$.  Suppose that $I$ is any maximal
independent set of $G \times G$.  We assume without loss of generality that $(a_1,a_1) \in I$.  The open neighborhood of $(a_1,a_1)$ is
$(X_2 \cup X_3) \times (X_2 \cup X_3)$, and this is the open neighborhood of every vertex in $X_1 \times X_1$.  By Lemma~\ref{lem:twins} it
follows that $X_1 \times X_1 \subseteq I$.  Since $I$ is a maximal independent set, $I$ has a nonempty intersection with exactly one of
$X_1 \times X_2$ or $X_2 \times X_1$.  Again with no loss of generality we may assume that $I \cap (X_1 \times X_2) \not = \emptyset$.  Using
Lemma~\ref{lem:twins} again we can infer that $I=X_1 \times V(G)$.  That is, $|I|=3r^2$, and thus $G\times G$ is well-covered.
\end{proof}

\medskip

\end{document}